\newcommand{\N}{\ensuremath{{\mathbb N}}}
\newtheorem{satz}{Theorem}[section]
\newtheorem{lem}[satz]{Lemma} 
\newtheorem{prop}[satz]{Proposition} 
\newtheorem{koro}[satz]{Corollary} 
\newtheorem{anmerk}[satz]{Remark}
 \newcommand{\ov}[1]{{\overline #1}}
\newcommand{\R}{\ensuremath{{\mathbb R}}}
\newcommand{\E}{\ensuremath{{\mathbb E}}}
\newcommand{\Pro}{\ensuremath{{\mathbb P}}}
\newcommand{\Ha}{\ensuremath{{\mathcal H}}}
\newcommand{\X}{\ensuremath{{\mathcal X}}}
\newcommand{\A}{\ensuremath{{\mathcal A}}}
\begin{document}

\title{On the Solution of General Impulse Control Problems Using Superharmonic Functions}
\author{S\"oren Christensen\thanks{Christian-Albrechts-Universit\"at, Mathematisches Seminar, Kiel, Germany, email: {christensen}@math.uni-kiel.de.}}
\date{\today}
\maketitle

\begin{abstract}
In this paper, a characterization of the solution of impulse control problems in terms of superharmonic functions is given. In a general Markovian framework, the value function of the impulse control problem is shown to be the minimal function in a convex set of superharmonic functions. This characterization also leads to optimal impulse control strategies and can be seen as the corresponding characterization to the description of the value function for optimal stopping problems as a smallest superharmonic majorant of the reward function. The results are illustrated with examples from different fields, including multiple stopping and optimal switching problems.
\end{abstract}

\textbf{Keywords:} impulse control strategies; superharmonic functions, general Markov processes \vspace{.8cm}

\textbf{Subject Classifications:} 49N25, 60G40.

\section{Introduction}
Stochastic control techniques play a major role in many fields of applied probability. In particular, the developments in mathematical finance have stimulated the activities in this branch of control theory in the last decades. Many of these approaches have the disadvantage that they lead to non-realizable optimal strategies since these strategies consist of interventions at each time instant in a continuous time model. The right mathematical framework to consider discrete interventions in a continuous time model is given by impulse control problems. \\
Impulse control problems have been studied for decades. It seems to be impossible to give an overview over all fields of application and all different variants that have been used. We only want to mention finance, e.g. cash management and portfolio optimization, see \cite{K} and \cite{PaStettner}, optimal forest management, see \cite{W}, \cite{A04} and the references therein, and control of an exchange rate by the Central Bank, see \cite{MO}, \cite{CZ}. Most of these articles are based on the seminal work developed in \cite{BL}, which still turns out to be the main reference for theoretical results in this field. For underlying diffusion process under some further assumptions, the value function is proved to be a solution of a corresponding quasi-variational inequality, that also characterizes the optimal strategy. A more recent overview over results for jump-diffusions is given in \cite{OS}, see also \cite{K} for a survey with focus on financial applications.\\
On the other hand, it it known that there is a strong connection between impulse control problems and problems of optimal stopping. Under certain conditions, the value function of the impulse control problem can be found as the limit of a sequence of value functions for associated optimal stopping problems, see \cite[Chapter 7]{OS}. Moreover, the value function of the impulse control problem can be characterized as a solution to an implicit problem of optimal stopping, where implicit means that the reward function in the optimal stopping problem contains this value function itself, see \cite{K}.\\
For Markovian problems of optimal stopping, the most flexible and valuable approach -- both from a theoretical and practical point of view -- seems to be the superharmonic characterization of the value function; more precisely, under minimal condition, the value function is the smallest superharmonic function majorizing the   reward function. This characterization goes back to Dynkin (\cite{Dy63}) and turned out to be the right formulation for most such problems. For an explicit solution, this approach can be translated into free-boundary problems, which can be solved in many problems of interest. An excellent overview over recent developments in this field is given in the monograph \cite{PS}. 
One of main advantages of considering superharmonic functions (instead of, e.g. using a formulation using quasi-variational inequalities) is that regularity conditions can often be stated in a more natural way from a stochastic point of view. \\
One of the consequences of this superharmonic characterization is that optimal stopping problems for an underlying one-dimensional diffusion process can be solved explicitly in many situations of interest, since the superharmonic functions turn out to be transformed concave functions, see \cite{DK}, \cite{BLe}, or \cite{CI} for recent treatments. Therefore, one can say that optimal stopping of one-dimensional diffusion processes is well-understood. Inspired by these result, in the last years, different authors considered special classes of impulse control problems for an underlying one-dimensional diffusion processes, and obtained a solution in terms of superharmonic (resp. excessive) functions, see \cite{A}, \cite{AL}, and \cite{E}. One of the main advantages of these approaches is that they work in a very general setting without strong regularity assumptions on the problem, that are often needed for applying alternative approaches. \\

The question arises whether there is also a general characterization of the value function of an impulse control problem as the smallest function in a set of superharmonic functions, as for optimal stopping problems. The aim of this article is to consider the impulse control problem from a purely superharmonic point of view to use the well-known advantages for optimal stopping problems also for impulse control problems. This is carried out in a very general Markovian setting in the following section. The main results are Theorems \ref{thm:measurable} and \ref{thm:minimizer}, that give a characterization of the value function as well as the existence and description of an optimal strategy under very general conditions: Under natural assumptions, (for the problem without integral term) the value function of an impulse control can be characterized as the smallest $r$-superharmonic function $h$ with $Mh\leq h$, where $M$ denotes the maximum operator. This can be seen as a consequent dual approach to impulse control: The maximization over impulse control strategies is transformed into a minimization problem over superharmonic functions. It turns out that it is possible to follow a similar line of argument as for optimal stopping problems, although some refinements of the arguments are needed of course. To see the connection, we use a similar presentation as given in  \cite[Chapter 1]{PS}. To the best of the author's knowledge, the presented approach is new in the literature, although many connected results are already known. Some of these connections are discussed in Section \ref{sec:examples}. Furthermore, the theoretical results are illustrated with examples there. For the article to have a moderate length and good readability, we illustrate our results on some examples only, and give some hints for further applications. The general framework for the results obtained in Section \ref{sec:theory} allows us to directly identify many other classes of problems as subclasses of the framework discussed before. More precisely, we apply the general impulse control theory to optimal stopping problems, multiple optimal stopping problems, and optimal switching problems and obtain the corresponding results for these classes. Furthermore, we treat on example with a discontinuous cost structure explicitly and give some hints for the solution of impulse control problems for general L\'evy processes.

\section{General theory}\label{sec:theory}
On a probability space $(\Omega,\mathcal{F},\Pro)$ with a filtration $(\mathcal{F}_t)_{t\geq0}$ we consider a stochastic process $X=(X_t)_{t\geq 0}$ with values in $(E,\mathcal{B})$, where $E$ is a locally compact separable metric space and $\mathcal{B}$ denotes the Borel $\sigma$-algebra. We assume that $X$ has c\`adl\`ag paths and is quasi left-continuous (left-continuous over stopping times). Furthermore, we assume $X$ to be a strong Markov process with respect to the family $(\Pro_x)_{x\in E}$ of probability measures with a measurable time shift operator $\theta$. Without loss of generality, we can assume that the process $X$ is given on the canonical space and the time-shift acts as $\theta_t[(\omega_s)_{s\geq 0}]=(\omega_{t+s})_{s\geq 0}$.\\
%

Our set of strategies are \textit{impulse control strategies}; these are sequences $S=(\tau_n,\gamma_n)_{n\in\N}$. For general Markov processes, the definition of the controlled process with respect to $S$ is not immediate. We only give an intuitive explanation here and remind the reader of the formal definition in the Appendix \ref{appendix:impulse}. \\
Under the controlled measures $(\Pro^S_x)_{x\in E}$, between each two random times $\tau_{n-1}<\tau_n$, the process runs uncontrolled with the same dynamics as the original process. At each random time $\tau_n$ an impulse is exercised and the process is restarted at the new state $\gamma_n$. Here, $\tau_n$ is a stopping time for the process $X^n$ with only $n-1$ controls and $\gamma_n$ is measurable with respect to the corresponding pre-$\tau_n$ $\sigma$-algebra. \\
For jump processes $X$, the following technical problem has to be taken into account: By construction, the process $X$ has a jump due to the control that take place in time $\tau_n$. But furthermore, the uncontrolled process $X^n$ on $[\tau_{n-1},\tau_n]$ may also have a jump in time $\tau_n$ if $X^n$ does not have continuous sample paths. Therefore, $X^n_{\tau_n}\not=X_{\tau_n-}$ in general. For our further considerations, it will be important to consider the process $X^n$ also at time point $\tau_n$. Therefore, we write
\[X_{\tau_n,-}:=X^n_{\tau_n}\]
for the value of the process at $\tau_n$ if no control is exercised. Obviously, for continuous underlying processes, we have $X_{\tau_n,-}=X_{\tau_n-}$, which motivates this notation.\\
We furthermore assume that for each $x\in E$ the set $\mathcal{A}(x)\subseteq E$ is the set of possible states that the process $X$ may be shifted to from state $x$, that is $\gamma_n$ is $\mathcal{A}(X_{\tau_n,-})$-valued. At time point $\tau_n$, the controlled process is shifted to a point $\gamma_n$ and between two stopping times $\tau_n$ and $\tau_{n+1}$ the process behaves as the uncontrolled Markov process. Note that $X_{\tau_n}=X_{\tau_{n,-}}$ is allowed. We call this action a degenerated shift and assume (without loss of generality) that this is always allowed. This corresponds to the case that no control takes place. As usual, we call an impulse control strategy $S=(\tau_n,\gamma_n)_{n\in\N}$ \textit{admissible}, if $\tau_n\nearrow\infty$ for $n\nearrow\infty$. \\
Moreover, let $K:E\times E\rightarrow\R$ be a measurable function, the cost functional, fulfilling
\begin{equation}\label{eq:ass_K}
\E_x^S\left(\sum_{n=0}^\infty e^{-r\tau_n}K^-(X_{\tau_n,-},X_{\tau_n})\right)<\infty
\end{equation}
for all admissible impulse control strategies $S=(\tau_n,\gamma_n)_{n}$, where $K^-$ denotes the negative part of $K$. We often think of the case that the cost functional $K$ is non negative. In that case, the assumption obviously holds true. We want to remark that in our discussion the cost function does not only depend on the difference $x-y$ (assuming that $E$ is a vector space) as in many other treatments of impulse control problems, but is an arbitrary measurable function of two variables.\\
In the following we interpret $K(x,y)$ as the cost of shifting the process from state $x$ to state $y$. Therefore, it is reasonable to assume that $K(x,x)=0$ for all $x$. With this convention, it it no restriction to assume that all stopping times $\tau_n$ are finite a.s., and to assume $\tau_0=0,X_{\tau_0}=X_0$.\\
We furthermore fix a measurable function $f:E\rightarrow \R$ such that 
\[\E_x^S\int_0^\infty e^{-rs}|f(X_s)| ds<\infty,\;\;x\in E,\;S.\]
Since degenerated shifts are allowed, it particularly holds that
\[\E_x\int_0^\infty e^{-rs}|f(X_s)| ds<\infty,\;\;x\in E.\] 
Therefore, the $r$-resolvent $\ov f$ of $f$ for the uncontrolled process is well defined, that is
\[\ov f(x)=\E_x\int_0^\infty e^{-rs}f(X_s) ds,\;\;x\in E.\]
We consider the impulse control problem given by the following value function
\[v(x)=\sup_{S=(\tau_n,\gamma_n)_{n}}\E_x^S\left(\int_0^\infty e^{-rs}f(X_s)ds-\sum_{n=0}^\infty e^{-r\tau_n}K(X_{\tau_n,-},X_{\tau_n})\right),\;\;x\in E,\]
where the supremum is taken over all admissible impulse control strategies.\\
First, note that for each $S=(\tau_n,\gamma_n)_{n}$ the expectation
\[\E_x^S\left(\int_0^\infty e^{-rs}f(X_s)ds-\sum_{n=0}^\infty e^{-r\tau_n}K(X_{\tau_n,-},X_{\tau_n})\right)\]
is well-defined in $[-\infty,\infty)$ since the first summand is integrable and the second is integrable in $[-\infty,\infty)$ by assumption \eqref{eq:ass_K}. Since degenerated shifts are allowed, we obtain
\begin{equation}\label{eq:vgeqf}
v(x)\geq \E_x\int_0^\infty e^{-rs}f(X_s)ds=\ov{f}(x)>-\infty,
\end{equation}
so that $v>-\infty$. We furthermore assume that $v$ is finite.\\
We first rewrite the reward of this control problem to deal with the integral term.

\begin{lem}\label{lem:resolvent}
\begin{enumerate}[(i)]
\item For all finite stopping times $\tau$ and all\ $x\in E$ it holds that
\[\E_x\int_0^\tau e^{-rs}f(X_s)ds=-\E_x\left(e^{-r\tau}\ov{f}(X_\tau)\right)+\ov f(x).\]
\item  For each admissible impulse control strategy $S=(\tau_n,\gamma_n)_{n}$ and each\ $x\in E$ it holds that
\begin{align*}
&\E_x^S\left(\int_0^\infty e^{-rs}f(X_s)ds-\sum_{n=0}^\infty e^{-r\tau_n}K(X_{\tau_n,-},X_{\tau_n})\right)\\
=&\E_x^S\left(\sum_{n=1}^\infty e^{-r\tau_n}\left(\ov f(X_{\tau_n})-\ov f(X_{\tau_n,-})-K(X_{\tau_n,-},X_{\tau_n})\right)\right)+\ov f(x).
\end{align*}
\item For all\ $x\in E$
\[v(x)-\ov f(x)=\sup_{S=(\tau_n,\gamma_n)_{n}}\E_x^S\sum_{n=1}^\infty e^{-r\tau_n}\ov K(X_{\tau_n,-},X_{\tau_n}),\]
where 
\[\ov K(X_{\tau_n,-},X_{\tau_n})=\ov f(X_{\tau_n})-\ov f(X_{\tau_n,-})-K(X_{\tau_n,-},X_{\tau_n})\]
\end{enumerate}
\end{lem}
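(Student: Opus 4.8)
The plan is to prove the three parts in order, using (i) as the engine for (ii) and then deducing (iii) by a telescoping/limiting argument.

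For part (i), I would apply the strong Markov property together with a resolvent identity. Writing $\ov f(x) = \E_x\int_0^\infty e^{-rs}f(X_s)\,ds$ and splitting the integral at the finite stopping time $\tau$, the tail piece is $\E_x\int_\tau^\infty e^{-rs}f(X_s)\,ds = \E_x\bigl(e^{-r\tau}\E_{X_\tau}\int_0^\infty e^{-ru}f(X_u)\,du\bigr) = \E_x\bigl(e^{-r\tau}\ov f(X_\tau)\bigr)$, where the middle step uses the strong Markov property and the measurable shift operator $\theta$. Rearranging gives the claimed formula. The integrability assumption on $f$ guarantees all the expectations involved are finite, so Fubini-type manipulations are justified; I should be slightly careful that $\tau$ is only assumed finite a.s., not bounded, but this is handled by the integrability assumption which makes $\E_x\int_0^\infty e^{-rs}|f(X_s)|\,ds<\infty$, so the tail contribution is well-defined.

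For part (ii), I would apply part (i) successively on the random intervals $[\tau_{n-1},\tau_n)$ under the controlled measure $\Pro_x^S$. On each such interval the controlled process runs with the uncontrolled dynamics started from $X_{\tau_{n-1}}$, so a conditional version of (i) (conditioning on $\mathcal{F}_{\tau_{n-1}}$ and using the strong Markov property for the piece $X^n$) yields $\E_x^S\bigl(\int_{\tau_{n-1}}^{\tau_n} e^{-rs}f(X_s)\,ds \mid \mathcal{F}_{\tau_{n-1}}\bigr) = e^{-r\tau_{n-1}}\ov f(X_{\tau_{n-1}}) - \E_x^S\bigl(e^{-r\tau_n}\ov f(X_{\tau_n,-})\mid\mathcal{F}_{\tau_{n-1}}\bigr)$, where $X_{\tau_n,-}=X^n_{\tau_n}$ appears precisely because the uncontrolled piece is evaluated at its right endpoint. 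Summing over $n\ge 1$, adding the initial interval $[0,\tau_0)=[0,0)$ (which contributes nothing since $\tau_0=0$), and telescoping, the terms $e^{-r\tau_n}\ov f(X_{\tau_n})$ (arriving as the start of interval $n{+}1$) cancel against $e^{-r\tau_n}\ov f(X_{\tau_{n},-})$-type terms only partially: what survives is $\ov f(x)$ from the first interval plus a sum of $e^{-r\tau_n}(\ov f(X_{\tau_n}) - \ov f(X_{\tau_n,-}))$. Since admissibility forces $\tau_n\nearrow\infty$, the boundary term at infinity vanishes (using $\E_x^S\int_0^\infty e^{-rs}|f(X_s)|\,ds<\infty$ to control the tail). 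Subtracting $\sum_n e^{-r\tau_n}K(X_{\tau_n,-},X_{\tau_n})$ from both sides then gives the stated identity; here assumption \eqref{eq:ass_K} ensures the cost sum is well-defined in $[-\infty,\infty)$, matching the remark that the whole expression lies in $[-\infty,\infty)$.

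Part (iii) is then immediate: by (ii) the objective in the definition of $v(x)$ equals $\E_x^S\sum_{n=1}^\infty e^{-r\tau_n}\ov K(X_{\tau_n,-},X_{\tau_n}) + \ov f(x)$ with $\ov K$ as defined, and $\ov f(x)$ does not depend on $S$, so it can be pulled out of the supremum, yielding $v(x)-\ov f(x) = \sup_S \E_x^S\sum_{n=1}^\infty e^{-r\tau_n}\ov K(X_{\tau_n,-},X_{\tau_n})$.

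I expect the main obstacle to be the careful justification of the interchange of summation, expectation, and conditioning in part (ii) — in particular making the telescoping rigorous when only a.s.\ finiteness of the $\tau_n$ is available and the cost sum may be $-\infty$, and verifying that the boundary term at $n\to\infty$ genuinely vanishes. This requires invoking the integrability hypothesis on $f$ and the admissibility condition $\tau_n\nearrow\infty$ together with dominated/monotone convergence, and correctly handling the distinction between $X_{\tau_n}$ and $X_{\tau_n,-}=X^n_{\tau_n}$ that is the whole point of the earlier notational setup.
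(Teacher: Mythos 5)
Your proposal is correct and follows essentially the same route as the paper: part (i) by splitting the resolvent at $\tau$ via the strong Markov property, part (ii) by applying (i) on each uncontrolled interval $[\tau_{n-1},\tau_n]$ under $\Pro_x^S$, telescoping, and killing the boundary term $\E_x^S\bigl(e^{-r\tau_n}\ov f(X_{\tau_n})\bigr)\to 0$ using admissibility and the integrability of $f$ (the paper phrases this via dominated convergence and the identity $\E_x^S\bigl(e^{-r\tau_n}\ov f(X_{\tau_n})\bigr)=\E_x^S\int_{\tau_n}^\infty e^{-rs}f(X_s)\,ds$), and part (iii) by pulling the $S$-independent term $\ov f(x)$ out of the supremum. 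No substantive difference from the paper's argument.
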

\begin{proof}
(i) immediately holds by the strong Markov property. \\
For (ii) note that the process $X$ runs uncontrolled between each two stopping times $\tau_{n-1},\tau_n$. Therefore, (i) yields
\[\E_x^S\int_{\tau_{n-1}}^{\tau_n}e^{-rs}f(X_s)ds=-\E_x^S\left(e^{-r\tau_n}\ov f(X_{\tau_n,-})-e^{-r\tau_{n-1}}\ov f(X_{\tau_{n-1}})\right).\]
We obtain
\begin{align*}
&\E_x^S\left(\int_0^\infty e^{-rs}f(X_s)ds-\sum_{n=0}^\infty e^{-r\tau_n}K(X_{\tau_n,-},X_{\tau_n})\right)\\
=&\E_x^S\left(\sum_{n=1}^\infty \int_{\tau_{n-1}}^{\tau_n}e^{-rs}f(X_s)ds-\sum_{n=0}^\infty e^{-r\tau_n}K(X_{\tau_n,-},X_{\tau_n})\right)\\
=&\E_x^S\left(\sum_{n=1}^\infty -(e^{-r\tau_n}\ov f(X_{\tau_n,-})-e^{-r\tau_{n-1}}\ov f(X_{\tau_{n-1}})) -\sum_{n=1}^\infty e^{-r\tau_n}K(X_{\tau_n,-},X_{\tau_n})\right)
\end{align*}
Now note that for each $k$
\[\sum_{n=1}^k -(e^{-r\tau_n}\ov f(X_{\tau_n,-})-e^{-r\tau_{n-1}}\ov f(X_{\tau_{n-1}}))=\sum_{n=1}^ke^{-r\tau_n}(\ov f(X_{\tau_n})-\ov f(X_{\tau_n,-}))-e^{-r\tau_k}\ov f(X_{\tau_k})+\ov f(x).\]
By the same argument as in (i) and dominated convergence it holds that
\[\E_x^S\left(e^{-r\tau_n}\ov{f}(X_{\tau_n})\right)=\E_x^S\int_0^{\infty} e^{-rs}f(X_s)ds-\E_x^S\int_0^{\tau_n} e^{-rs}f(X_s)ds\rightarrow0.\]
Therefore,
\begin{align*}
&\E_x^S\left(\int_0^\infty e^{-rs}f(X_s)ds-\sum_{n=0}^\infty e^{-r\tau_n}K(X_{\tau_n,-},X_{\tau_n})\right)\\
=&\E_x^S\left(\sum_{n=1}^\infty e^{-r\tau_n}\left(\ov f(X_{\tau_n})-\ov f(X_{\tau_n,-})-K(X_{\tau_n,-},X_{\tau_n})\right)\right)+\ov f(x).
\end{align*}
Since the summand $\ov f(x)$ is independent of $S$, taking the supremum over all $S$ gives (iii).
\end{proof}
Note that by (iii) of the previous Lemma, we could assume -- without loss of generality -- that $f=0$. But since an integral term arises in many problems of interest, we keep a general $f$ in the following. \\

Now, we introduce a set of superharmonic functions, that will be the main ingredient for our further considerations:
\[\mathcal{H}:=\{h:E\rightarrow \R| h\mbox{ is $r$-superharmonic, }h\geq 0,\;h+\ov f\geq M(h+\ov f)\},\]
where the maximum operator $M$ is given by 
\[Mw(x)=\sup_{y\in \mathcal{A}(x)\setminus\{x\}}(w(y)-K(x,y)),\]
with the convention $\sup\emptyset=-\infty$.

Next, we see that each function in $\mathcal{H}$ is an upper bound for the value function. Furthermore, we get lower bounds for some special functions.
\begin{prop}\label{prop:h_maj} Let $h:E\rightarrow \R$.
\begin{enumerate}[(i)]
\item If $h\in \mathcal{H}$, it holds that
\[v\leq h+\ov f.\] 
\item If $x\in E$ and $S=(\tau_n,\gamma_n)_{n}$ is an impulse control strategy such that 
\begin{equation}\label{eq:harmonic}
\E_x^S\left(e^{-r\tau_n}h(X_{\tau_n,-})\right)=\E_x^S\left(e^{-r\tau_{n-1}}h(X_{\tau_{n-1}})\right)\mbox{ for all }n\in\N,
\end{equation}
\[(h+\ov f)(X_{\tau_n})-K(X_{\tau_n,-},X_{\tau_n})\geq (h+\ov f)(X_{\tau_n,-})\;\;\;\Pro_x^S-a.s.,\]
and fulfilling the growth condition 
\[\E_x^Se^{-r\tau_n}h(X_{\tau_n,-})\rightarrow 0\mbox{ for }n\rightarrow\infty,\]
then
\[(h+\ov f)(x)\leq v(x).\]

\end{enumerate}
\end{prop}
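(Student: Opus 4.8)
The plan is to reduce both statements, via parts (ii) and (iii) of Lemma~\ref{lem:resolvent}, to an estimate on the pure impulse reward $\E_x^S\sum_{n\geq1}e^{-r\tau_n}\ov K(X_{\tau_n,-},X_{\tau_n})$, and to derive that estimate from a telescoping identity together with the $r$-superharmonicity of $h$. The bridge between the hypotheses and the sum is a pointwise inequality at the intervention times: since $X_{\tau_n}\in\A(X_{\tau_n,-})$, either $X_{\tau_n}=X_{\tau_n,-}$, in which case $K(X_{\tau_n,-},X_{\tau_n})=0$, or $X_{\tau_n}\in\A(X_{\tau_n,-})\setminus\{X_{\tau_n,-}\}$; in both cases, if $h\in\Ha$ the inequality $h+\ov f\geq M(h+\ov f)$ evaluated at $X_{\tau_n,-}$ gives $(h+\ov f)(X_{\tau_n,-})\geq (h+\ov f)(X_{\tau_n})-K(X_{\tau_n,-},X_{\tau_n})$, which upon subtracting $\ov f(X_{\tau_n,-})$ and rearranging is exactly
\[\ov K(X_{\tau_n,-},X_{\tau_n})\leq h(X_{\tau_n,-})-h(X_{\tau_n})\qquad\Pro_x^S\text{-a.s.};\]
under the hypotheses of (ii) the corresponding inequality holds with ``$\geq$''.

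\textbf{Part (i).} Fix an admissible $S$ and $x\in E$. Multiplying the displayed inequality by $e^{-r\tau_n}$, summing over $n=1,\dots,N$ and reorganising the telescoping sum yields
\[\sum_{n=1}^N e^{-r\tau_n}\ov K(X_{\tau_n,-},X_{\tau_n})\ \leq\ \sum_{n=1}^N\bigl(e^{-r\tau_n}h(X_{\tau_n,-})-e^{-r\tau_{n-1}}h(X_{\tau_{n-1}})\bigr)-e^{-r\tau_N}h(X_{\tau_N})+h(x).\]
On each interval $[\tau_{n-1},\tau_n]$ the process runs uncontrolled, so by $r$-superharmonicity of $h$ (equivalently, the supermartingale property of $t\mapsto e^{-rt}h(X_t)$ for the uncontrolled process) together with the strong Markov property one has $\E_x^S\bigl(e^{-r\tau_n}h(X_{\tau_n,-})\bigr)\leq\E_x^S\bigl(e^{-r\tau_{n-1}}h(X_{\tau_{n-1}})\bigr)$; combined with $h\geq0$ this gives $\E_x^S\sum_{n=1}^N e^{-r\tau_n}\ov K(X_{\tau_n,-},X_{\tau_n})\leq h(x)$ for every $N$. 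Letting $N\to\infty$ (see below) and then taking the supremum over $S$ in Lemma~\ref{lem:resolvent}(iii) gives $v(x)-\ov f(x)\leq h(x)$.

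\textbf{Part (ii).} The same telescoping identity is used, but the hypotheses are now exploited in the opposite direction: the reversed pointwise inequality gives $\sum_{n=1}^N e^{-r\tau_n}\ov K\geq\sum_{n=1}^N e^{-r\tau_n}\bigl(h(X_{\tau_n,-})-h(X_{\tau_n})\bigr)$, and \eqref{eq:harmonic} makes every telescoping increment have $\Pro_x^S$-expectation zero, so $\E_x^S\sum_{n=1}^N e^{-r\tau_n}\ov K\geq h(x)-\E_x^S e^{-r\tau_N}h(X_{\tau_N})$. Using \eqref{eq:harmonic} once more, $\E_x^S e^{-r\tau_N}h(X_{\tau_N})=\E_x^S e^{-r\tau_{N+1}}h(X_{\tau_{N+1},-})\to0$ by the growth condition; letting $N\to\infty$ (via the limit identity below) yields $\E_x^S\sum_{n=1}^\infty e^{-r\tau_n}\ov K\geq h(x)$. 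Lemma~\ref{lem:resolvent}(ii) then identifies the left-hand side as the reward of this particular $S$ minus $\ov f(x)$, and since $v(x)$ dominates that reward we conclude $(h+\ov f)(x)\leq v(x)$.

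\textbf{The main obstacle} is the passage $N\to\infty$ in the partial sums of $\ov K$: the series $\sum_n e^{-r\tau_n}\ov K(X_{\tau_n,-},X_{\tau_n})$ need not converge absolutely, so the interchange of limit and expectation is not automatic. I would avoid working with the series directly and instead re-derive, exactly as in the proof of Lemma~\ref{lem:resolvent}(ii), the identity
\[\E_x^S\sum_{n=1}^N e^{-r\tau_n}\ov K(X_{\tau_n,-},X_{\tau_n})=\E_x^S\left(\int_0^{\tau_N}e^{-rs}f(X_s)\,ds-\sum_{n=1}^N e^{-r\tau_n}K(X_{\tau_n,-},X_{\tau_n})+e^{-r\tau_N}\ov f(X_{\tau_N})\right)-\ov f(x),\]
whose right-hand side converges to $\E_x^S\sum_{n=1}^\infty e^{-r\tau_n}\ov K(X_{\tau_n,-},X_{\tau_n})$ because $\tau_N\nearrow\infty$, the map $\int_0^\infty e^{-rs}|f(X_s)|\,ds$ is $\Pro_x^S$-integrable, $\E_x^S e^{-r\tau_N}\ov f(X_{\tau_N})\to0$ by Lemma~\ref{lem:resolvent}, and the $K$-sum converges by dominated convergence on its negative part (assumption~\eqref{eq:ass_K}) and monotone convergence on its positive part. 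Besides this, one must keep track of the technicalities of the controlled process — in particular that $X_{\tau_n,-}=X^n_{\tau_n}$ is reached by the uncontrolled dynamics started at $X_{\tau_{n-1}}$, which is what legitimises the supermartingale estimate between consecutive interventions — and note that strategies with reward $-\infty$ only make the bound in (i) easier.
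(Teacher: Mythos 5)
Your argument is correct and follows essentially the same route as the paper's proof: reduce to the sum of $\ov K$-terms via Lemma~\ref{lem:resolvent}, bound each intervention term by the inequality $h+\ov f\geq M(h+\ov f)$ (reversed under the hypotheses of (ii)), and control the telescoping $h$-increments by the supermartingale property together with $h\geq 0$ (resp.\ by \eqref{eq:harmonic} and the growth condition). Your explicit treatment of the passage $N\to\infty$ via the partial-sum identity is a welcome refinement of a step the paper leaves implicit, but it does not change the method.
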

\begin{proof}
Let $S=(\tau_n,\gamma_n)_{n}$ be an arbitrary admissible impulse control strategy and $x\in\ E$ such that
\[\E_x^S\left(\int_0^\infty e^{-rs}f(X_s)ds-\sum_{n=0}^\infty e^{-r\tau_n}K(X_{\tau_n,-},X_{\tau_n})\right)>-\infty.\]
Since $h\in\mathcal{H}$, by the optional sampling theorem for nonnegative supermartingales we obtain (keeping in mind that $X$ runs uncontrolled between $\tau_{n-1}$ and $\tau_n$ under $\E^S$)
\[\E_x^S\left(e^{-r\tau_n}h(X_{\tau_n,-})-e^{-r\tau_{n-1}}h(X_{\tau_{n-1}})\right)\leq 0.\]
Using this inequality and Lemma \ref{lem:resolvent} we get
\begin{align*}
&\E_x^S\left(\int_0^\infty e^{-rs}f(X_s)ds-\sum_{n=0}^\infty e^{-r\tau_n}K(X_{\tau_n,-},X_{\tau_n})\right)\\
=&\E_x^S\left(\sum_{n=1}^\infty e^{-r\tau_n}\left(\ov f(X_{\tau_n})-\ov f(X_{\tau_n,-})-K(X_{\tau_n,-},X_{\tau_n})\right)\right)+\ov f(x)\\
\leq&\E_x^S\left(\sum_{n=1}^\infty e^{-r\tau_n}\left((h+\ov f)(X_{\tau_n})-(h+\ov f)(X_{\tau_n,-})-K(X_{\tau_n,-},X_{\tau_n})\right)\right)+(h+\ov f)(x).
\end{align*}
Since $h+\ov f\geq M(h+\ov f)$ we obtain that 
\[(h+\ov f)(X_{\tau_n})-(h+\ov f)(X_{\tau_n,-})-K(X_{\tau_n,-},X_{\tau_n})\leq 0,\]
therefore
\begin{align*}
&\E_x^S\left(\int_0^\infty e^{-rs}f(X_s)ds-\sum_{n=0}^\infty e^{-r\tau_n}K(X_{\tau_n,-},X_{\tau_n})\right)\leq (h+\ov f)(x).
\end{align*}
Because $S$ was arbitrary, we see that $v(x)\leq (h+\ov f)(x)$, that is (i).\\
On the other hand, under the stated conditions we obtain (ii) by following the previous proof. 
\end{proof}

The previous proposition can be seen as a verification theorem. Indeed, it is a generalization (with less-explicit assumptions) of \cite[Theorem 6.2]{OS}. Now, we examine the structure of the solution more detailed:

\begin{satz}\label{thm:measurable}
Assume that $v$ is measurable.\\
Then $v-\ov f$ is the pointwise minimizer of $\Ha$, i.e. $v-\ov f\in \Ha$ and $v-\ov f\leq h$ for all $h\in \mathcal{H}$.
\end{satz}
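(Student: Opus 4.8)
The inequality half of the statement is already in hand: for any $h\in\mathcal H$, Proposition \ref{prop:h_maj}(i) gives $v\le h+\ov f$, i.e. $v-\ov f\le h$. So the whole content is to verify that the candidate $v-\ov f$ actually belongs to $\mathcal H$, which amounts to three points: $v-\ov f\ge 0$; the maximum inequality $v=(v-\ov f)+\ov f\ge M\big((v-\ov f)+\ov f\big)=Mv$; and $r$-superharmonicity of $v-\ov f$. The first is immediate from \eqref{eq:vgeqf}.

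For the maximum inequality $v\ge Mv$ I would use the obvious concatenation (Bellman) argument. Fix $x\in E$ and $y\in\mathcal A(x)\setminus\{x\}$, and let $S'$ be an arbitrary admissible strategy started in $y$. Define a strategy $S$ started in $x$ by performing a first impulse $\gamma_1=y$ at $\tau_1=0$ and then running $S'$ (with the obvious relabelling of the remaining pairs); $S$ is admissible since $\tau_n\nearrow\infty$ under $S'$. Because $K(x,x)=0$ the convention $\tau_0=0$, $X_{\tau_0}=X_0$ contributes nothing, and under $\Pro_x^S$ the controlled dynamics agrees with the $S'$-controlled dynamics started in $y$ except that the cost $K(x,y)$ is paid at time $0$; hence $\E_x^S[\,\cdot\,]=\E_y^{S'}[\,\cdot\,]-K(x,y)$. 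Taking the supremum over $S'$ yields $v(x)\ge v(y)-K(x,y)$, and then the supremum over $y\in\mathcal A(x)\setminus\{x\}$ gives $v(x)\ge Mv(x)$ (trivially so when $\mathcal A(x)\setminus\{x\}=\emptyset$).

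For $r$-superharmonicity of $g:=v-\ov f$ I would use the strategy ``stay idle up to a fixed time $t$, then continue $\varepsilon$-optimally''. Fix $x\in E$, $t>0$, $\varepsilon>0$; choosing for each $z\in E$ an $\varepsilon$-optimal strategy $S^{z}$ started in $z$ and concatenating (idle on $[0,t)$, then run $S^{X_t}$), the strong Markov property at $t$ gives
\[
v(x)\;\ge\;\E_x\!\left[\int_0^t e^{-rs}f(X_s)\,ds\right]+\E_x\!\left[e^{-rt}\bigl(v(X_t)-\varepsilon\bigr)\right].
\]
Applying Lemma \ref{lem:resolvent}(i) to the integral term and letting $\varepsilon\downarrow 0$ gives $g(x)\ge \E_x\!\left[e^{-rt}g(X_t)\right]$ for all $t\ge 0$; together with the Markov property this makes $(e^{-rt}g(X_t))_{t\ge0}$ a supermartingale under each $\Pro_x$, and with $g\ge0$ and the right-continuity of $X$ this is $r$-superharmonicity. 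If one insists on the pointwise regularity $e^{-rt}P_tg\uparrow g$ as $t\downarrow0$, I would pass to the $r$-excessive regularization $\hat g:=\lim_{t\downarrow0}e^{-rt}P_tg\le g$: it is $r$-superharmonic, is still $\ge 0$, and still satisfies $\hat g+\ov f\ge M(\hat g+\ov f)$ because $\hat g$ agrees with $g$ off an exceptional set invisible to the dynamics and $M$ only involves $\ov f$, which is already $r$-excessive; hence $\hat g\in\mathcal H$, so Proposition \ref{prop:h_maj}(i) forces $v\le\hat g+\ov f\le g+\ov f=v$, whence $g=\hat g$ is itself $r$-superharmonic.

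The main obstacle is the measurable-selection point hidden in both concatenation arguments: turning ``continue $\varepsilon$-optimally from $X_t$'' (and, in the maximum inequality, ``continue optimally after the first impulse'') into a genuine impulse control strategy requires a measurable family $z\mapsto S^{z}$ of $\varepsilon$-optimal strategies, which in this general Markovian framework must be produced with some care — exploiting the canonical-space setup, the measurability of $v$, and regular conditional distributions / transition kernels. The remaining items (admissibility of the concatenated strategies, the integrability needed for the expectations to split as stated, and the stability of the $M$-inequality under excessive regularization) are routine given the standing assumptions and Lemma \ref{lem:resolvent}.
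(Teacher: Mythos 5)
Your reduction to showing $v-\ov f\in\Ha$ via Proposition \ref{prop:h_maj}(i), the nonnegativity from \eqref{eq:vgeqf}, and the concatenation argument for $v\geq Mv$ all match the paper (the latter is what the paper compresses into ``since immediate control is possible''). The gap is in the superharmonicity step, and it is exactly the point you flag and then defer: the measurable family $z\mapsto S^{z}$ of $\varepsilon$-optimal strategies is not ``routine given the standing assumptions'' in this generality, and nothing in your sketch produces it. The paper avoids the selection problem altogether: for an arbitrary finite stopping time $\sigma$ it writes, via Lemma \ref{lem:resolvent} and the strong Markov property, $e^{-r\sigma}(v-\ov f)(X_\sigma)$ as the \emph{essential supremum} of the conditional expectations $\E_{x}^{S_\sigma}\bigl(\sum_{n\geq1} e^{-r\tau_{n,\sigma}}\ov K(X_{\tau_{n,\sigma}-},X_{\tau_{n,\sigma}})\,\big|\,\mathcal{F}_\sigma\bigr)$ indexed by time-shifted strategies $S_\sigma$; this family is directed upwards (as in \cite[p.~47]{PS}), so one fixed countable sequence $S_k$ attains the essential supremum simultaneously, and monotone convergence yields $\E_x e^{-r\sigma}(v-\ov f)(X_\sigma)\leq (v-\ov f)(x)$ for every finite stopping time $\sigma$. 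This gives superharmonicity directly in the form the rest of the paper uses (optional sampling at stopping times in Proposition \ref{prop:h_maj}), with no choice of strategy depending measurably on the state and no restriction to deterministic times.

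Your fallback via the excessive regularization $\hat g:=\lim_{t\downarrow0}e^{-rt}P_t g$ also has a concrete error: from $M(g+\ov f)\leq g+\ov f$ and $\hat g\leq g$ you can only conclude $M(\hat g+\ov f)\leq M(g+\ov f)\leq g+\ov f$, which is not the required $M(\hat g+\ov f)\leq \hat g+\ov f$ at points where $\hat g<g$ — lowering the function inside the supremum helps, but you have simultaneously lowered the right-hand side at precisely the points of disagreement. Nor is the set $\{\hat g<g\}$ ``invisible to the dynamics'' here: the controller may shift the process to arbitrary states in $\mathcal A(x)$, so exceptional sets for the uncontrolled semigroup are not negligible for the controlled problem. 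Hence $\hat g\in\Ha$ is not established and the bootstrap $v\leq\hat g+\ov f\leq v$ collapses; the essential-supremum argument above is the clean way to close the proof.
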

\begin{proof}
By Proposition \ref{prop:h_maj} (i) it suffices to show that $v-\ov f\in \Ha$. Note that $v-\ov f\geq 0$ by \eqref{eq:vgeqf}. On the other hand, since immediate control is possible, $(v-\ov f)+\ov f=v\geq Mv=M((v-\ov f)+\ov f)$.\\
It remains to prove that $v-\ov f$ is $r$-superharmonic. Let\ $\sigma$ be a finite stopping time. By the measurability of $v$, we know that the following function is a random variable:
\[e^{-r\sigma}(v-\ov f)(X_\sigma)=\sup_{S=(\tau_n,\gamma_n)_{n}}e^{-r\sigma}\E_{X_\sigma}^S\sum_{n=1}^\infty e^{-r\tau_n}\ov K(X_{\tau_n,-},X_{\tau_n}),\]
where the equality holds by Lemma \ref{lem:resolvent}.
On the other hand, using the strong Markov property, almost surely we have for each admissible impulse control $S=(\tau_n,\gamma_n)_{n}$
\begin{align*}
&e^{-r\sigma}\E_{X_\sigma}^S\sum_{n=1}^\infty e^{-r\tau_n}\ov K(X_{\tau_n,-},X_{\tau_n})\\
=&\E_{x}^{S_\sigma}\left(\sum_{n=1}^\infty e^{-r\tau_{n,\sigma}}\ov K(X_{\tau_{n,\sigma}-},X_{\tau_{n,\sigma}})\Big|\mathcal{F}_\sigma\right),
\end{align*}
where $S_\sigma=(\tau_{n,\sigma},\gamma_{n,\sigma})$ is the time-shifted impulse control given by\ $\tau_{n,\sigma}=\sigma+\tau_n\circ\theta_\sigma,\gamma_{n,\sigma}=\gamma_n\circ\theta_\sigma$. This -- together with the measurability -- shows that $e^{-r\sigma}(v-\ov f)(X_\sigma)$ is the essential supremum of the set
\[\left\{\E_{x}^{S_\sigma}\left(\sum_{n=1}^\infty e^{-r\tau_{n,\sigma}}\ov K(X_{\tau_{n,\sigma}-},X_{\tau_{n,\sigma}})\Big|\mathcal{F}_\sigma\right):S\mbox{ impulse control}\right\}.\]
Following the line of arguments in \cite[p. 47]{PS}, it is easily seen that this set is directed upwards. By the standard properties of the essential supremum, there exists a sequence $(S_k)_{k\in\N}$ such that
\[\E_{x}^{S_{k,\sigma}}\left(\sum_{n=1}^\infty e^{-r\tau_{k,n,\sigma}}\ov K(X_{\tau_{k,n,\sigma}-},X_{\tau_{k,n,\sigma}})\Big|\mathcal{F}_\sigma\right)\nearrow e^{-r\sigma}(v-\ov f)(X_\sigma),\;\;k\nearrow\infty.\]
By the monotone convergence theorem we obtain
\[\E_x e^{-r\sigma}(v-\ov f)(X_\sigma)=\lim_{k\rightarrow\infty}\E_{x}^{S_{k,\sigma}}\left(\sum_{n=1}^\infty e^{-r\tau_{k,n,\sigma}}\ov K(X_{\tau_{k,n,\sigma}-},X_{\tau_{k,n,\sigma}})\right)\leq (v-\ov f)(x).\]
\end{proof}

The assumption that $v$ is measurable is natural in many situations of interest. For example, whenever the function
\[x\mapsto\E_x^S\left(\int_0^\infty e^{-rs}f(X_s)ds-\sum_{n=0}^\infty e^{-r\tau_n}K(X_{\tau_n,-},X_{\tau_n})\right)\]
is lower semicontinuous for each $S$, then $v$ is lower semicontinuous as a supremum of lower semicontinuous functions. Another sufficient condition is given in the following corollary.

\begin{koro}
Assume that there exists an optimal impulse control strategy $S=(\tau_n,\gamma_n)_n$.\\
Then $v-\ov f$ is the pointwise minimizer of $\Ha$.
\end{koro}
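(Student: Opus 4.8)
The plan is to deduce this from Theorem~\ref{thm:measurable}: once we know that $v$ is measurable, the conclusion is immediate. So the whole point is to exploit the existence of an optimal strategy to get measurability of $v$. Let $S=(\tau_n,\gamma_n)_n$ be an admissible impulse control strategy attaining the supremum in the definition of $v$ for every $x\in E$. By Lemma~\ref{lem:resolvent}(ii), applied to this single $S$, together with the notation $\ov K$ from Lemma~\ref{lem:resolvent}(iii),
\[
v(x)=\ov f(x)+\E_x^S\sum_{n=1}^\infty e^{-r\tau_n}\ov K(X_{\tau_n,-},X_{\tau_n}),\qquad x\in E .
\]
Since $\ov f$ is measurable, the claim reduces to the measurability of the map $x\mapsto\E_x^S\sum_{n\ge1}e^{-r\tau_n}\ov K(X_{\tau_n,-},X_{\tau_n})$.

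To obtain this I would invoke the formal construction of the controlled dynamics from Appendix~\ref{appendix:impulse}. There, for a fixed admissible strategy $S$, the controlled measure $\Pro_x^S$ is built by pasting together the transition data of $X$ along the successive stopping times $\tau_n$ and shift variables $\gamma_n$; in particular $x\mapsto\Pro_x^S(\,\cdot\,)$ is a measurable kernel, so that $x\mapsto\E_x^S(Y)$ is measurable for every bounded measurable $Y$, and, by truncation and monotone convergence, for every $Y$ that is quasi-integrable under each $\Pro_x^S$. The random variable $Y=\sum_{n\ge1}e^{-r\tau_n}\ov K(X_{\tau_n,-},X_{\tau_n})$ is of this kind: by Lemma~\ref{lem:resolvent}(ii) its $\Pro_x^S$-expectation equals $\E_x^S\bigl(\int_0^\infty e^{-rs}f(X_s)\,ds-\sum_{n\ge0}e^{-r\tau_n}K(X_{\tau_n,-},X_{\tau_n})\bigr)-\ov f(x)$, which is well defined in $[-\infty,\infty)$ by the integrability assumption on $f$ and assumption~\eqref{eq:ass_K}. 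Hence $v-\ov f$, and therefore $v$, is measurable.

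With $v$ measurable, Theorem~\ref{thm:measurable} applies verbatim and yields $v-\ov f\in\Ha$ together with $v-\ov f\le h$ for all $h\in\Ha$; that is, $v-\ov f$ is the pointwise minimizer of $\Ha$, which is the assertion of the corollary.

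I expect the only point requiring care to be the measurability of $x\mapsto\E_x^S(\,\cdot\,)$ for a fixed admissible strategy $S$. This is not a genuinely new probabilistic difficulty but a matter of reading the kernel property off the construction in the appendix; if one prefers to avoid that reference, the same measurability can be obtained by representing $v$ as a countable supremum over a suitably rich countable subfamily of strategies whose reward maps are measurable. Everything else in the argument is a direct appeal to results already established above.
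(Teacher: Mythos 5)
Your proposal is correct and follows essentially the same route as the paper: represent $v-\ov f$ via Lemma~\ref{lem:resolvent} using the optimal strategy $S$, deduce measurability of $v$, and then apply Theorem~\ref{thm:measurable}. The only difference is that you spell out the measurability of $x\mapsto\E_x^S(\cdot)$ via the kernel construction of the appendix, which the paper leaves implicit.
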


\begin{proof}
By Lemma \ref{lem:resolvent} we see that 
\[(v-\ov f)(x)=\E_x^S\sum_{n=1}^\infty e^{-r\tau_n}\ov K(X_{\tau_n,-},X_{\tau_n}).\]
Therefore, $v$ is measurable and the claim holds by the Theorem \ref{thm:measurable}.
\end{proof}

In the following, we construct an optimal admissible impulse control strategy $S=(\tau_n,\gamma_n)_{n}$, i.e. $S=(\tau_n,\gamma_n)_{n}$ is an admissible impulse control strategy and for all $x\in E$ it holds that
\[v(x)=\E_x^S\left(\int_0^\infty e^{-rs}f(X_s)ds-\sum_{n=0}^\infty e^{-r\tau_n}K(X_{\tau_n,-},X_{\tau_n})\right).\]
To this end, we will often assume the following weak form of the triangle inequality for the cost function:
For all $x\in E$ and $y\in \mathcal{A}(x)\setminus \{x\}$, there exists some $\epsilon>0$ such that for all $z\in \mathcal{A}(x)\cap \mathcal{A}(y)$
\begin{equation}\label{eq:triangle}
K(x,y)+K(y,z)\geq K(x,z)+\epsilon.
\end{equation}
Note that this is a natural assumption for optimal impulse control problems, where often two types of costs are assumed: Fixed costs and proportional costs. The proportional costs naturally fulfill the standard triangle inequality. Now, since the fixed costs have to be added, the extra summand  $\epsilon$ is natural. For example, in the survey article \cite{K} the cost structure in $\R^d$ was assumed to have the form $K(x,y)=|x-y|+K$ for some $K>0$, where the assumption \eqref{eq:triangle} is obviously fulfilled. Furthermore, we often assume that for all $x\in E$
\begin{equation}\label{eq:trading}
\mathcal{A}(y)\subseteq \mathcal{A}(x)\mbox{  for all }y\in\mathcal{A}(x).
\end{equation}
In other words, \eqref{eq:trading} means that if it is possible to shift the process from state $x$ to state $y$ and from state $y$ to state $z$, then it is also possible to shift the process from state $x$ to state $z$ directly. The natural conditions \eqref{eq:triangle} and \eqref{eq:trading} guarantee that if it is rational to trade from $x$ to $y$, then no immediate trading in $y$ is rational:

\begin{prop}\label{prop:continuation}
Assume \eqref{eq:triangle} and \eqref{eq:trading}. Let $\ov v:E\rightarrow \R$ be a measurable and $x,y\in E,\;x\not=y,$ such that
 \[M\ov v(x)=\ov v(y)-K(x,y).\] Then,
\[\ov v(y)>M\ov v(y).\]
\end{prop}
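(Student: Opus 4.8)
The plan is to bound the operator $M$ at $y$ term by term. Since $M\ov v(y)=\sup_{z\in\mathcal{A}(y)\setminus\{y\}}\bigl(\ov v(z)-K(y,z)\bigr)$ (and the claim is trivial when this supremum is over the empty set, i.e. equals $-\infty$), it suffices to produce a single $\epsilon>0$ with $\ov v(z)-K(y,z)\le\ov v(y)-\epsilon$ for every $z\in\mathcal{A}(y)\setminus\{y\}$. The natural choice for $\epsilon$ is the constant supplied by the weak triangle inequality \eqref{eq:triangle} applied with base $x$ and point $y$; here one uses that $y\in\mathcal{A}(x)\setminus\{x\}$, i.e. that the identity $M\ov v(x)=\ov v(y)-K(x,y)$ genuinely records that $y$ attains the maximum defining $M\ov v(x)$. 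To bring \eqref{eq:triangle} into play, observe that $y\in\mathcal{A}(x)$ together with \eqref{eq:trading} gives $\mathcal{A}(y)\subseteq\mathcal{A}(x)$, so each $z\in\mathcal{A}(y)$ lies in $\mathcal{A}(x)\cap\mathcal{A}(y)$ and hence satisfies $K(x,z)\le K(x,y)+K(y,z)-\epsilon$.

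For the generic case $z\neq x$ this already finishes the estimate: then $z\in\mathcal{A}(x)\setminus\{x\}$, so $z$ is a competitor in the supremum defining $M\ov v(x)$, giving $\ov v(z)-K(x,z)\le M\ov v(x)=\ov v(y)-K(x,y)$, and therefore
\[
\ov v(z)-K(y,z)\;\le\;\bigl(\ov v(y)-K(x,y)\bigr)+\bigl(K(x,z)-K(y,z)\bigr)\;\le\;\ov v(y)-\epsilon ,
\]
where the first inequality is obtained by adding $K(x,z)-K(y,z)$ to both sides of $\ov v(z)-K(x,z)\le\ov v(y)-K(x,y)$ and the second is \eqref{eq:triangle} in the form $K(x,z)-K(y,z)\le K(x,y)-\epsilon$.

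The remaining case $z=x$, which occurs exactly when $x\in\mathcal{A}(y)$, is the only real obstacle: the hypothesis relates $M\ov v(x)$ only to the single value $\ov v(y)-K(x,y)$ and gives no control on $\ov v(x)$ itself, so the term $\ov v(x)-K(y,x)$ occurring in $M\ov v(y)$ cannot be bounded without a further input. That input is exactly that shifting away from $x$ is (weakly) rational, $\ov v(x)\le M\ov v(x)=\ov v(y)-K(x,y)$, which is automatic in the intended applications, where $x$ lies in the action region and thus $\ov v(x)=M\ov v(x)$. Granting this, and using \eqref{eq:triangle} with third point $z=x$ — which yields $K(x,y)+K(y,x)\ge K(x,x)+\epsilon=\epsilon$ — one gets $\ov v(x)-K(y,x)\le\ov v(y)-K(x,y)-K(y,x)\le\ov v(y)-\epsilon$. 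Combining the two cases, $M\ov v(y)\le\ov v(y)-\epsilon<\ov v(y)$, as claimed. The heart of the matter is thus the case $z=x$: showing that an immediate re-trade in $y$ cannot profitably return the process to $x$.
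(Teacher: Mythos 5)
Your argument for $z\in\mathcal{A}(y)\setminus\{y\}$ with $z\neq x$ is exactly the paper's proof: replace $K(y,z)$ by the lower bound $K(x,z)-K(x,y)+\epsilon$ from \eqref{eq:triangle}, then use that $z$ competes in the supremum defining $M\ov v(x)$. Where you go beyond the paper is the case $z=x$, and you are right to single it out: the paper's chain contains the step $\ov v(z)-K(x,z)\leq M\ov v(x)$, which is unavailable for $z=x$ because that supremum ranges over $\mathcal{A}(x)\setminus\{x\}$ and so never sees the value $\ov v(x)$ itself. This is not a pedantic worry --- the proposition as literally stated, for an arbitrary measurable $\ov v$, is false. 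Take $E=\{x,y\}$, $\mathcal{A}(x)=\mathcal{A}(y)=\{x,y\}$, $K(x,y)=K(y,x)=1$, $\ov v(x)=100$, $\ov v(y)=0$: then \eqref{eq:trading} holds and \eqref{eq:triangle} holds with $\epsilon=2$ (reading it for $z\neq y$, as one must in any case, since $z=y$ would force $K(y,y)\geq\epsilon$, i.e.\ $0\geq\epsilon$), the hypothesis $M\ov v(x)=-1=\ov v(y)-K(x,y)$ is satisfied, yet $M\ov v(y)=99>0=\ov v(y)$. Your repair is the right one: adding $\ov v(x)\leq M\ov v(x)$ gives $\ov v(x)-K(y,x)\leq\ov v(y)-K(x,y)-K(y,x)\leq\ov v(y)-\epsilon$ via \eqref{eq:triangle} with third point $x$, and this extra hypothesis is indeed available at the one place the proposition is invoked, namely step (d) of the proof of Theorem \ref{thm:minimizer}, where $x=X_{\tau_n,-}\in\ov S$ and hence $\ov v(x)=M\ov v(x)$. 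In short: same approach as the paper on the main case, plus a correct identification and patch of a genuine gap in both the statement and the paper's own proof.
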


\begin{proof}
Choose $\epsilon >0$ as in \eqref{eq:triangle} and let\ $z\in \mathcal{A}(y)$. By \eqref{eq:trading} we have $z\in\mathcal{A}(x)$. We obtain
\begin{align*}
\ov v(z)-K(y,z)&\leq \ov v(z)-(K(x,z)-K(x,y)+\epsilon)\\
&\leq  M\ov v(x)+K(x,y)-\epsilon\\
&=\ov v(y)-K(x,y)+K(x,y)-\epsilon\\
&=\ov v(y)-\epsilon,
\end{align*}
hence $\ov v(y)\geq \ov v(z)-K(y,z)+\epsilon$. Taking supremum over all $z$ yields 
\[\ov v(y)\geq M\ov v(y)+\epsilon>M\ov v(y).\]
\end{proof}

Now, we come to the second main result of this section, that is a theorem that guarantees the existence of an optimal impulse control strategy. Furthermore, the optimal strategy is described in terms of the pointwise minimum of $\mathcal{H}$ under natural assumptions. The advantage of this theorem compared to the previous results is that it is stated in term of the minimizer of $\Ha$ and not in terms of the (unknown) value function $v$. Therefore, no (direct) regularity assumptions on $v$ are needed, that are often hard to establish.

\begin{satz}\label{thm:minimizer}
Assume that \eqref{eq:triangle} and  and \eqref{eq:trading} hold true and that $\ov f$ is nonnegative and lower semicontinuous.\\
Assume that $h$ is a pointwise minimizer in $\Ha$, that $\ov v:=h+\ov f$ is lower semicontinuous (lsc), and $M\ov v$ is upper semicontinuous (usc), that fulfills the integrability condition
\begin{equation}\label{eq:integrability}
\E_x\sup_{t\geq0}e^{-rt}|M(\ov v-\ov f)(X_t)|<\infty\mbox{ for all }x\in E.
\end{equation}
Furthermore, assume that the stopping time 
\[\tau_{\ov S}:=\inf\{t\geq 0:X_t\in \ov S\},\;\;\;\ov S=\{x\in\ E:\ov v(x)=M\ov v(x)\},\]
is finite $\Pro_x$-a.s. for all $x\in E$, and that there exists a measurable function $\phi:\ov S\rightarrow E$ such that for each $x\in \ov S$
\[M\ov v(x)=\ov v(\phi(x))-K(x,\phi(x)),\]
and let the impulse control strategy $S$ given by
\begin{align*}
&\tau_0=0,\;\;\gamma_0=x\;\Pro_x-\mbox{a.s. for all }x\in E,\\
&\tau_n=\inf\{t>\tau_{n-1}:\ov v(X_t)=M\ov v(X_t)\},\\
&\gamma_n=\phi(X_{\tau_n,-})
\end{align*}
be admissible and $\E_x^S e^{-r\tau_n}h(X_{\tau_n})\rightarrow 0$.\\[.2cm]
Then it holds that 
\[v=\ov v\] 
and $S$ is an optimal admissible impulse control.
\end{satz}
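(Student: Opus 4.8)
The plan is to combine the two halves of Proposition~\ref{prop:h_maj}: part~(i) already gives $v \le h + \ov f = \ov v$, so the entire task is to prove the reverse inequality $\ov v \le v$, and the natural way to do this is to verify that the explicitly constructed strategy $S = (\tau_n,\gamma_n)_n$ satisfies the three hypotheses of Proposition~\ref{prop:h_maj}(ii) with this $h$; then $\ov v(x) = (h+\ov f)(x) \le v(x)$ for every $x$, and since $S$ is admissible by assumption, plugging $S$ back into the value functional shows it attains $\ov v(x)$, so $S$ is optimal. Thus the real work is checking the three conditions of~(ii) for this particular $S$.

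First I would check the "harmonic" condition \eqref{eq:harmonic}, i.e.\ $\E_x^S\big(e^{-r\tau_n}h(X_{\tau_n,-})\big) = \E_x^S\big(e^{-r\tau_{n-1}}h(X_{\tau_{n-1}})\big)$. Between $\tau_{n-1}$ and $\tau_n$ the process runs uncontrolled, and $\tau_n$ is by construction the first entry time of the uncontrolled process (started from $X_{\tau_{n-1}}$, i.e.\ $X_{\tau_{n-1},-}$ after the $(n-1)$st shift) into the set $\ov S = \{\ov v = M\ov v\}$; on the complement of $\ov S$, i.e.\ on the "continuation region", we should have $\ov v(X_t) > M\ov v(X_t)$, and I expect the standard optimal-stopping fact — that an $r$-superharmonic function is $r$-harmonic up to the first entry time into its "stopping set" — to give equality rather than just the supermartingale inequality $\le$. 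Here I would need the lower/upper semicontinuity assumptions on $\ov v$ and $M\ov v$ (to ensure $\ov S$ is closed, so $\tau_{\ov S}$ is a genuine hitting time and is approached correctly via quasi-left-continuity), plus the integrability condition \eqref{eq:integrability} to justify optional sampling and the passage to the limit inside the expectation. This harmonicity argument, transplanted from Chapter~1 of~\cite{PS} to the controlled setting, is where I expect the main technical obstacle to lie.

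Second, the inequality $(h+\ov f)(X_{\tau_n}) - K(X_{\tau_n,-},X_{\tau_n}) \ge (h+\ov f)(X_{\tau_n,-})$, i.e.\ $\ov v(\gamma_n) - K(X_{\tau_n,-},\gamma_n) \ge \ov v(X_{\tau_n,-})$, should hold with equality: by definition of $S$ we have $X_{\tau_n,-} \in \ov S$, so $\ov v(X_{\tau_n,-}) = M\ov v(X_{\tau_n,-})$, and the choice $\gamma_n = \phi(X_{\tau_n,-})$ gives $M\ov v(X_{\tau_n,-}) = \ov v(\phi(X_{\tau_n,-})) - K(X_{\tau_n,-},\phi(X_{\tau_n,-}))$; combining these two equalities yields exactly the required identity. (Here I would use Proposition~\ref{prop:continuation}, under assumptions \eqref{eq:triangle} and \eqref{eq:trading}, to see that $\gamma_n \notin \ov S$ in the relevant sense — equivalently that immediate re-intervention at $\gamma_n$ is not triggered — which is what makes the strategy well-posed and keeps the $\tau_n$ strictly increasing, feeding into admissibility.) Third, the growth condition $\E_x^S e^{-r\tau_n} h(X_{\tau_n,-}) \to 0$ follows from the growth condition we have assumed on $\E_x^S e^{-r\tau_n} h(X_{\tau_n}) \to 0$ together with the harmonicity established in the first step (which ties $h(X_{\tau_n,-})$ and $h(X_{\tau_{n-1}})$ in expectation), or directly from \eqref{eq:integrability} and dominated convergence since $\tau_n \nearrow \infty$ by admissibility. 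Once these three conditions are in hand, Proposition~\ref{prop:h_maj}(ii) gives $\ov v \le v$, completing the proof.
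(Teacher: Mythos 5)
Your overall skeleton agrees with the paper's final step: part (i) of Proposition \ref{prop:h_maj} gives $v\leq \ov v$, the choice $\gamma_n=\phi(X_{\tau_n,-})$ together with $X_{\tau_n,-}\in\ov S$ gives the shift inequality in (ii) with equality, Proposition \ref{prop:continuation} gives $\tau_n<\tau_{n+1}$, and the growth condition follows from the assumed $\E_x^Se^{-r\tau_n}h(X_{\tau_n})\to0$ once \eqref{eq:harmonic} is in hand. But the one step you defer -- verifying \eqref{eq:harmonic}, i.e.\ that the minimizer $h$ satisfies $h(x)=\E_xe^{-r\tau_{\ov S}}h(X_{\tau_{\ov S}})$ -- is the actual content of the theorem, and the ``standard optimal-stopping fact'' you invoke does not cover it. An arbitrary $r$-superharmonic function is certainly not $r$-harmonic up to the entry time of $\{\ov v=M\ov v\}$; harmonicity off the stopping set is a property of the \emph{smallest} superharmonic majorant in ordinary stopping, and even that is a theorem whose proof must be redone here, because the obstacle $M\ov v$ is implicit (it depends on $\ov v$ itself), so $h$ is not the value function of any stopping problem with a fixed reward to which you could appeal. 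What has to be exploited is precisely the pointwise minimality of $h$ in the convex set $\Ha$, and you never use it beyond quoting it.

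The paper closes this gap as follows: for $\lambda\in(0,1)$ it sets $S_\lambda=\{\lambda\ov v\leq M\ov v\}$ (closed by the semicontinuity assumptions), $\tau_\lambda$ its entry time, $g(x)=\E_xe^{-r\tau_\lambda}(\ov v-\ov f)(X_{\tau_\lambda})$, and shows that $\tilde v-\ov f\in\Ha$ for $\tilde v:=\lambda\ov v+(1-\lambda)(g+\ov f)$; this uses the convexity of the operator $M$, the bound $M(g+\ov f)\leq M\ov v$ coming from superharmonicity of $\ov v-\ov f$, a case distinction between $S_\lambda$ and its complement, and the nonnegativity of $h$ and $\ov f$. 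Minimality of $h$ then forces $h(x)\leq\E_xe^{-r\tau_\lambda}h(X_{\tau_\lambda})$, hence equality, and letting $\lambda\nearrow1$ -- using quasi-left-continuity, the lsc/usc assumptions, the bound $\ov v(X_{\tau_\lambda})\leq\lambda^{-1}M\ov v(X_{\tau_\lambda})$ and dominated convergence via \eqref{eq:integrability} -- yields \eqref{eq:harm}, which is exactly \eqref{eq:harmonic} for the constructed strategy after an application of the strong Markov property at $\gamma_{n-1}\notin\ov S$. Without this $\lambda$-approximation (or some substitute argument that genuinely uses minimality in $\Ha$), your proof does not go through; flagging the step as ``the main technical obstacle'' does not discharge it.
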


\begin{proof}
\begin{enumerate}[(a)]
\item For $\lambda\in(0,1)$ write 
\[S_\lambda=\{x\in E: \lambda \ov v(x)\leq M\ov v(x)\}\mbox{ and }C_\lambda=S_\lambda^c.\]
Since $\ov v$ is lsc and $M\ov v$ is usc, we see that $S_\lambda$ is a closed set and 
 \[S_\lambda\searrow \ov S:=\{x\in E: \ov v(x)= M\ov v(x)\},\;\; \lambda\nearrow 1.\]
Write $\tau_\lambda:=\inf\{t\geq 0:\ X_t\in S_\lambda\}$.  Since $\tau_{\ov S}<\infty$ $\Pro_x$-a.s. for all $x\in E$ and since $ \ov S\subseteq S_\lambda$ we obtain that $\tau_\lambda$ is a.s. finite under all measures $\Pro_x,x\in E$. 
\item  Fix $\lambda<1$. We write 
\[g(x):=\E_xe^{-r\tau_\lambda}(\ov v-\ov f)(X_{\tau_\lambda})\mbox{, $x\in E$.}\] 

Now we show $\tilde v-\ov f\in \mathcal{H}$, where  $\tilde v(x):=\lambda\ov v(x)+(1-\lambda)(g+\ov f)(x)$. First we show that for all $x\in E$ it holds that 
\begin{equation}\label{eq:M_tilde_v_gleichung}
M\tilde v(x)\leq \tilde v(x).
\end{equation}
By noting that $M$ is a convex operator (see also Subsection \ref{subsec:convex}), we have
\begin{align*}
M\tilde v(x)&\leq \lambda M\ov v(x)+(1-\lambda)M(g+\ov f)(x)\\
&= \lambda M\ov v(x)+(1-\lambda)\sup_{y\in \mathcal A(x)}[\E_y e^{-r\tau_\lambda}(\ov v-\ov f)(X_{\tau_\lambda})+\ov f(y)-K(x,y)]\\
&\leq \lambda M\ov v(x)+(1-\lambda)\sup_{y\in \mathcal A(x)}[(\ov v-\ov f)(y)+\ov f(y)-K(x,y)]\\
&= \lambda M\ov v(x)+(1-\lambda)M\ov v(x)=M\ov v(x).
\end{align*}
If $x\in S_\lambda$, by the previous inequality and the definition of $\ov v$ it holds that
\[M\tilde v(x)\leq M\ov v(x)\leq \ov v(x)=\lambda\ov v(x)+(1-\lambda)\ov v(x)=\lambda\ov v(x)+(1-\lambda)(g+\ov f)(x)=\tilde{v}(x),\]
where we used that $\tau_\lambda=0$ $\Pro_x$-a.s. for $x\in S_\lambda$, which implies 
\[g(x)+\ov f(x)=\E_xe^{-r\tau_\lambda}(\ov v-\ov f)(X_{\tau_\lambda})+\ov f(x)=\ov v(x)\mbox{ for $x\in S_\lambda$}.\]
 On the other hand, for $x\in C_\lambda$ we have $M\ov v(x)\leq \lambda \ov v(x)$. Hence,
\begin{align*}
M\tilde v(x)&\leq M\ov v(x)\leq \lambda\ov v(x).
\end{align*}
To obtain \eqref{eq:M_tilde_v_gleichung}, it remains to be proved that $\lambda\ov v(x)\leq \tilde v(x)=\lambda\ov v(x)+(1-\lambda)(g+\ov f)(x)$ for $x\in C_\lambda$. This holds since $h,\ov f\geq0$, which implies
\[(g+\ov f)(x)=\E_xe^{-r\tau_\lambda}(\ov v-\ov f)(X_{\tau_\lambda})+\ov f(x)=\E_xe^{-r\tau_\lambda}h(X_{\tau_\lambda})+\ov f(x)\geq 0.\]
 Since $\ov v-\ov f$ is $r$-superharmonic, by the general theory of superharmonic functions we know that so is  $g:x\mapsto \E_xe^{-r\tau_\lambda}(\ov v-\ov f)(X_{\tau_\lambda})$. Therefore, so is $\tilde v-\ov f$. Hence, we have proved that $\tilde v-\ov f\in\mathcal H$.\\
 By the minimality property of $h=\ov v-\ov f$ we obtain  
\begin{align*}
(\ov v-\ov f)(x)&\leq (\tilde v-\ov f)(x)=\lambda(\ov v-\ov f)(x)+(1-\lambda)g(x)\\
&=\lambda(\ov v-\ov f)(x)+(1-\lambda)\E_xe^{-r\tau_\lambda}(\ov v-\ov f)(X_{\tau_\lambda}),
\end{align*}
i.e. $(\ov v-\ov f)(x)\leq \E_xe^{-r\tau_\lambda}(\ov v-\ov f)(X_{\tau_\lambda}).$
Keeping in mind that $\ov v- \ov f$ is $r$-superharmonic, we obtain
\[h(x)=(\ov v-\ov f)(x)=\E_xe^{-r\tau_\lambda}(\ov v-\ov f)(X_{\tau_\lambda})=\E_xe^{-r\tau_\lambda}h(X_{\tau_\lambda}). \]
\item Since $\tau_\lambda$ is monotonically increasing in $\lambda$, $\tau:=\lim_{\lambda\rightarrow1}\tau_\lambda$ exists and $\tau\leq \tau_{\ov S }$. As $X$ is quasi left-continuous, it holds that $\lim_{\lambda\nearrow 1}X_{\tau_\lambda}=X_\tau$. Because of the semicontinuity of $\ov v$ and $M\ov v$ and because $\ov v(X_{\tau_\lambda})\leq \frac{1}{\lambda}M\ov v(X_{\tau_\lambda})$ for $\lambda\nearrow 1$, we have that $\ov v(X_\tau)=M\ov v(X_\tau)$. Therefore, $\tau_{\ov S}\leq \tau$, i.e. $\tau_{\ov S}= \tau$.
Using dominated concvergence and the usc of $M\ov v-\ov f$ we obtain
\begin{align*}
h(x)&=\liminf_{\lambda\nearrow 1} \E_xe^{-r\tau_\lambda}h(X_{\tau_\lambda})\leq \liminf_{\lambda\nearrow 1} \frac{1}{\lambda}\E_xe^{-r\tau_\lambda}(M\ov v-\ov f)(X_{\tau_\lambda})\\
&\leq \E_xe^{-r\tau}(M\ov v-\ov f)(X_{\tau})\leq \E_x e^{-r\tau}h(X_{\tau})=\E_x e^{-r\tau_{\ov S}}h(X_{\tau_{\ov S}}).
\end{align*}
Since $h$ is $r$-superharmonic we obtain
\begin{equation}\label{eq:harm}
h(x)=\E_x e^{-r\tau_{\ov S}}h(X_{\tau_{\ov S}}).
\end{equation}
\item Note that $\ov v(X_{\tau_{\ov S}})=M\ov v(X_{\tau_{\ov S}})$. Furthermore, \eqref{eq:harm} yields that condition \eqref{eq:harmonic} from Proposition \ref{prop:h_maj} is fulfilled for the impulse control strategy given above. This strategy therefore fulfills the requirements of Proposition \ref{prop:h_maj} (ii). To see that it is indeed an impulse control strategy, note that $\tau_n<\tau_{n+1}$ is fulfilled by Proposition \ref{prop:continuation}. We obtain $\ov v(x)=v(x)$ and the optimality of the impulse control strategy. 
\end{enumerate}
\end{proof}

\begin{anmerk}
Note that the condition \eqref{eq:integrability} is a natural condition to guarantee that the value function is finite. For the corresponding equation for optimal stopping problems, see the connection in Subsection \ref{subsec:OS}.
\end{anmerk}

\begin{anmerk}
The assumption that $\ov f$ is nonnegative in the previous theorem can be weakened by assuming that the function $f$ is bounded below by some constant $c<0$. Then for all $x\in E$ it holds that
\begin{align*}
v(x)&=\sup_{S=(\tau_n,\gamma_n)_{n}}\E_x^S\left(\int_0^\infty e^{-rs}f(X_s)ds-\sum_{n=0}^\infty e^{-r\tau_n}K(X_{\tau_n,-},X_{\tau_n})\right)\\
&=\sup_{S=(\tau_n,\gamma_n)_{n}}\E_x^S\left(\int_0^\infty e^{-rs}(f-c)(X_s)ds-\sum_{n=0}^\infty e^{-r\tau_n}K(X_{\tau_n,-},X_{\tau_n})\right)+c/r
\end{align*}
and $f-c\geq 0$, which yields that the resolvent of ${f-c}$ is nonnegative and Theorem \ref{thm:minimizer} can be applied to the problem for $f-c$. 
\end{anmerk}

\section{Discussion and examples}\label{sec:examples}

\subsection{Impulse control as a convex optimization problem}\label{subsec:convex}
It is often convenient to consider optimal stopping problems as linear programming problems, see for example \cite{CS} for a discussion in a general setting. In the same line, the previous discussion shows that impulse control problems may be seen as convex optimization problems. Indeed, for a fixed state $x_0\in E$, we have seen in Theorem \ref{thm:measurable} and \ref{thm:minimizer} that -- under some natural conditions -- the value $v(x_0)$ is given as the the minimum of $h(x_0)$, where the minimum is taken over all $h\in \mathcal{H}$. Note that $\mathcal{H}$ is indeed convex since for all $\lambda\in[0,1],h_1,h_2\in\mathcal{H}$ for $h:=\lambda h_1+(1-\lambda)h_2\geq 0$ it holds that $h$ is $r$-superharmonic and for all $x\in E$
\begin{align*}
(h+\ov f)(x)&=\lambda (h_1+\ov f)(x)+(1-\lambda)(h_2+\ov f)(x)\\
&\geq \lambda M(h_1+\ov f)(x)+(1-\lambda)M(h_2+\ov f)(x)\\
&\geq M(\lambda (h_1+\ov f)+(1-\lambda)(h_2+\ov f))(x)\\
&=M(h+\ov f)(x),
\end{align*}
so that $h\in \mathcal{H}$. Therefore, the impulse control problem can be seen as the following convex programming problem:
\begin{align*}
\min_{h\mbox{ superharm.}} &\;\;\;\;h(x_0)\\
\mbox{subj. to}&\;\;\;\;(h+\ov f)(x)\geq M(h+\ov f)(x)\mbox{ for all }x\in E.
\end{align*}

\subsection{Connection to quasi-variational inequalities}
Now, we can identify the value function as a solution to the corresponding quasi-variational inequality under appropriate regularity conditions as follows: As described in the previous section, we can identify $h:=v-\ov f$ as the smallest $r$-superharmonic function with 
\begin{equation}\label{eq:max_qvi}
M(h+\ov f)\leq h+\ov f.
\end{equation} 
Now, we assume that $v$ is regular enough to apply the generator (or Dynkin operator) $A$ of $X$. Then we obtain
\[0\geq (A-r)(v-\ov f)(x)=(A-r)v(x)+f(x),\]
i.e. $(A-r)v+f\leq 0$. Moreover, by \eqref{eq:max_qvi} it holds that $Mv\leq v$. By the considerations leading to \eqref{eq:harm}, it is furthermore clear that $v-\ov f$ is $r$-harmonic on $\{Mv<v\}$, i.e. $(A-r)v+f=0$ on this set. We obtain that $v$ is a solution to the quasi-variational inequality
\[\max\{(A-r)v+f,Mv-v\}=0.\]
But note that for our approach, no further regularity assumptions on $v$ are needed, see also Example \ref{subsec:discont} below.


\subsection{One-dimensional diffusion processes}
For the explicit applicability of the theory, it is of interest to have a more explicit characterization of the $r$-superharmonic functions for the process $X$. Such a characterization is well-known for regular one-dimensional diffusion processes $X$ with absorbing or natural boundaries. In this case, a function $h$ is $r$-superharmonic if and only if $\frac{h}{\phi}$ is $\frac{\psi}{\phi}$-concave, where $\phi,\psi$ denote the increasing resp. decreasing fundamental $r$-harmonic functions. We refer to \cite{DK} for a recent treatment. One could say that the nonnegative $r$-superharmonic functions can be characterized as the concave functions in a transformed space. Therefore, one can characterize the value function geometrically as the smallest nonnegative extended concave function that fulfills $M(h+\ov f)\leq h+\ov f$. The main difficulty -- compared to the optimal stopping problem for one-dimensional diffusion processes -- is that the condition $M(h+\ov f)\leq h+\ov f$ is a nonlocal condition in general, since $M$ is a nonlocal operator. Under assumptions that simplify the operator $M$ in a suitable way, one can be hopeful to solve the problem geometrically. In some special situations, this idea was carried out, see \cite{E}. The main structural results obtained there -- in our notation -- is the following (see \cite[Proposition 3.1]{E}):
\begin{prop}\label{prop:Egami}
Under the assumptions stated in \cite{E}, $v-\ov f$ is the smallest function $h\geq 0$ such that
\[(h+\ov f)(x)=\sup_{\tau}\E_xe^{-r\tau}M(h+\ov f)(X_\tau).\]
\end{prop}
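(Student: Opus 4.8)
The plan is to establish the two halves of the statement in turn: first, that $\ov v:=v$ (so that $h+\ov f=\ov v$ with $h:=v-\ov f$) solves the implicit optimal stopping equation $\ov v(x)=\sup_{\tau}\E_x e^{-r\tau}M\ov v(X_\tau)$ for all $x\in E$; and second, that $v-\ov f$ lies below every nonnegative $h$ for which $h+\ov f$ solves that equation.

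For the first half, the inequality ``$\ge$'' is straightforward: by Theorem \ref{thm:measurable} the function $v-\ov f$ is $r$-superharmonic, and under the regularity assumed in \cite{E} so is $\ov f$, so $v$ is itself $r$-superharmonic and satisfies $v\ge Mv$; since $x\mapsto\sup_\tau\E_x e^{-r\tau}Mv(X_\tau)$ is the smallest $r$-superharmonic majorant of $Mv$ (the Dynkin characterization of the optimal stopping value), it lies below $v$. For ``$\le$'' I would invoke the dynamic programming principle at the first intervention: taking an optimal (or $\varepsilon$-optimal) strategy $S=(\tau_n,\gamma_n)_n$ and using that its continuation from the post-jump state $\gamma_1$ is again optimal leads to
\[v(x)=\sup_{\tau}\E_x\Bigl(\int_0^{\tau}e^{-rs}f(X_s)\,ds+e^{-r\tau}Mv(X_\tau)\Bigr),\]
the inner optimization over the $\mathcal F_\tau$-measurable post-jump state $\gamma_1\in\mathcal A(X_\tau)\setminus\{X_\tau\}$ producing the term $Mv(X_\tau)$; Lemma \ref{lem:resolvent}(i) then rewrites $\E_x\int_0^{\tau}e^{-rs}f(X_s)\,ds$ as $\ov f(x)-\E_x e^{-r\tau}\ov f(X_\tau)$, and under the hypotheses of \cite{E} this reduces to the asserted equation.

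For the second half, let $h\ge0$ be such that $\ov w:=h+\ov f$ satisfies $\ov w(x)=\sup_\tau\E_x e^{-r\tau}M\ov w(X_\tau)$. Being the value of an optimal stopping problem, $\ov w$ is $r$-superharmonic and $\ov w\ge M\ov w$; since $h=\ov w-\ov f\ge0$ and, under the assumptions of \cite{E}, $\ov w-\ov f$ is again $r$-superharmonic, we get $h\in\mathcal H$, and Theorem \ref{thm:measurable} then gives $h\ge v-\ov f$ immediately. A route that sidesteps checking $h\in\mathcal H$ is an induction along the iterated optimal stopping approximation $v_0=\ov f$, $v_{n+1}(x)=\sup_\tau\E_x\bigl(\int_0^{\tau}e^{-rs}f(X_s)\,ds+e^{-r\tau}Mv_n(X_\tau)\bigr)$, which is known to increase to $v$: one has $\ov w\ge\ov f=v_0$, and $\ov w\ge v_n$ forces $M\ov w\ge Mv_n$ and hence $v_{n+1}\le\ov w$ by the fixed-point equation for $\ov w$, so that $v=\lim_n v_n\le\ov w$.

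The hard part will be the equality in the dynamic programming step of the first half: one must justify that the approximating strategy can be taken with $\tau_1$ finite and with a genuine impulse $\gamma_1\ne X_{\tau_1,-}$, so that the post-jump term is really $Mv(X_{\tau_1,-})$ and not merely $v(X_{\tau_1,-})$, and one must carry out the running-cost bookkeeping behind Lemma \ref{lem:resolvent}(i), including the uniform integrability that lets one neglect the boundary term $\E_x^S e^{-r\tau_n}\ov f(X_{\tau_n})$ as $n\to\infty$. This is exactly where the regularity hypotheses of \cite{E} are used; Proposition \ref{prop:continuation} (no rational immediate re-trading after a rational trade) and Proposition \ref{prop:h_maj}(ii) (verification for the candidate strategy) furnish the structural facts needed to make this precise.
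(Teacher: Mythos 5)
Your overall architecture (show that $v$ solves the fixed-point equation, then show that any nonnegative solution dominates $v-\ov f$) is reasonable, and your minimality half is close in spirit to the paper's: reduce to membership in $\Ha$ (resp.\ the auxiliary class $\tilde{\mathcal H}$) and invoke Theorem \ref{thm:measurable}. The genuine gap is in the first half, exactly where you flag it. You obtain the inequality $v(x)\le\sup_\tau\E_x(\cdots)$ from a first-intervention dynamic programming principle whose equality you do not prove and which is established nowhere in this paper; deferring it wholesale to ``the regularity hypotheses of \cite{E}'' defeats the purpose of the proposition, which is precisely that Egami's characterization drops out of the paper's own superharmonic machinery. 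The paper closes this step in one line using equation \eqref{eq:harm} from the proof of Theorem \ref{thm:minimizer}: with $h=v-\ov f$ and $\ov S=\{\ov v=M\ov v\}$ one has $h(x)=\E_xe^{-r\tau_{\ov S}}h(X_{\tau_{\ov S}})$, and since $\ov v=M\ov v$ on $\ov S$ the concrete stopping time $\tau_{\ov S}$ attains the supremum in the stopping problem with reward $M\ov v$; the reverse inequality is then your superharmonic-majorant argument ($h$ is $r$-superharmonic and majorizes $M\ov v-\ov f$ because $\ov v\ge M\ov v$). So the ``hard part'' you identify is the place where you should have used \eqref{eq:harm} and the explicit optimizer $\tau_{\ov S}$, rather than an unproven DPP with measurable-selection and uniform-integrability issues left open.

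A secondary problem sits in your minimality half: you assert that $h=\ov w-\ov f$ is $r$-superharmonic ``under the assumptions of \cite{E}''. From the fixed-point equation you only get that $\ov w$ itself is $r$-superharmonic (as the value of a stopping problem); subtracting the resolvent $\ov f$ does not preserve superharmonicity in general (it would require $e^{-rt}\ov w(X_t)+\int_0^te^{-rs}f(X_s)\,ds$ to be a supermartingale, a strictly stronger statement). This is exactly why the paper reformulates the statement through the class $\tilde{\mathcal H}$, whose definition builds in both the superharmonicity of $h$ and the minimality ``$w\ge h$ for every $r$-superharmonic $w$ with $w+\ov f\ge M(h+\ov f)$'', and then uses only the inclusion $\tilde{\mathcal H}\subseteq\Ha$ together with Theorem \ref{thm:measurable}. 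Your fallback induction along the iterated stopping approximations $v_n\nearrow v$ would work but imports an external convergence result instead of the paper's framework.
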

This result can also be seen as the main ingredient used in \cite{A}, see equation (2.8) there. By the general theory of optimal stopping, the previous fact can be stated in the following form: Writing
\begin{align*}
\tilde{\mathcal{H}}:=\{h:E\rightarrow \R| &h\mbox{ is $r$-superharmonic, }h\geq 0,\;h+\ov f\geq M(h+\ov f),\\
&\forall\mbox{$w$ $r$-superharm. with $w+\ov f\geq M(h+\ov f)$}:w\geq h\},
\end{align*}
Proposition \ref{prop:Egami} states that $v-\ov f$ is a pointwise minimizer of $\tilde{\mathcal{H}}$. Obviously, $\tilde{\mathcal{H}}\subseteq {\mathcal{H}}$. On the other hand, it is easily seen -- using \eqref{eq:harm} -- that $v\in \tilde{\mathcal{H}}$, so that Proposition \ref{prop:Egami} can be obtained from our general theory in the previous section. 
In all the examples discussed in \cite{A}, \cite{E}, and \cite{AL}, the assumptions were chosen such that the set $S=\{x\in E:v(x)=Mv(x)\}$ turns out to be essentially one-sided and the process is shifted back to one special point. Next, we discuss a (depending on the parameter) one- or two-sided problem, that can be dealt with using our approach:

\subsection{Example: Discontinuous costs}\label{subsec:discont}
Now, we treat one example in detail, since it deliver insights into the use of $r$-superharmonic functions for the solution of impulse control problems. For simplicity, we consider a standard Brownian motion $X$ as an underlying process on $E=\R$. Let $f\equiv 0$ and assume that at each intervention we can shift the process to the state $0$ or go on, that is $\mathcal{A}(x)=\{0,x\}$ for all $x\in \R$. If we stop at a state $x\geq 1$, we receive an amount of 1, and we have to pay costs of 1 for $x<1$. In our notation, we have $K(x,0)=-1$ for $x\geq 1$ and $K(x,0)=1$ for $x<1$ ($x\not=0$). Note that the cost functional is discontinuous at $x=1$, which is not easy to handle for the ordinary approaches to impulse control. Nonetheless, the formulation using superharmonic function can deal with this problem immediately:\\
Now, we discuss how to construct a pointwise minimizer in $\mathcal{H}$. First, note that 
\begin{equation}\label{eq:M_brownian}
Mh(x)=h(0)-K(x,0)=\begin{cases}h(0)+1&,\;\;x\geq 1,\\ h(0)-1&,\;\;x<1.\end{cases}
\end{equation}
For $x\geq 1$ it seems to be reasonable to stop immediately and receive the reward. Therefore, we make the Ansatz $Mh(x)=h(x)$ for $x\geq 1$, i.e. $h(x)=h(0)+1$. For $x<1$, it is not obvious if it is reasonable to shift the process to state 0. Indeed, it depends on the discounting parameter $r$. We distinguish two cases:\\[0.2cm]

\begin{figure}[h]
\begin{minipage}{0.5\textwidth}
\begin{center}
\includegraphics[height=5cm]{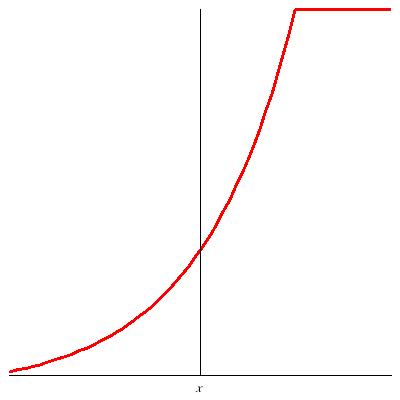}
\caption{Value function for case 1}\label{fig:v1}
\end{center}
\end{minipage}
\hfill
\begin{minipage}{0.5\textwidth}
\begin{center}
\includegraphics[height=5cm]{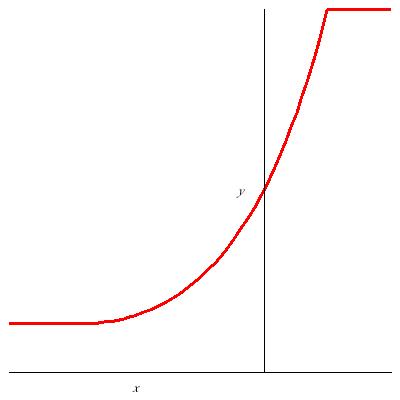}
\caption{Value function for case 2}\label{fig:v2}
\end{center}
\end{minipage}
\end{figure}

\textit{1. case}: $e^\beta\geq 2$, where $\beta=\sqrt{2r}$, i.e. the discounting factor is sufficiently high. We make the Ansatz, that $h$ is $r$-harmonic on $(-\infty,1]$. Furthermore, it seems to be reasonable, that $h$ is non-increasing on that interval. The general theory of $r$-harmonic functions yields that $h(x)=\lambda e^{\beta x}$ with $\beta$ as above for some $\lambda>0$. We see that $\lambda=h(0)$, so that we consider
\[h(x)=\begin{cases}
\lambda e^{\beta x},&\;\;x<1,\\
\lambda+1,&\;\;x\geq 1,
\end{cases}\]
see Figure \ref{fig:v1}. Since $r$-superharmonic functions for a standard Brownian motion are continuous, we should have $\lambda e^{\beta}=\lambda+1$, i.e. 
\[\lambda=\frac{1}{e^\beta-1}.\]
Now, we check that $h\in \mathcal{H}$. Obviously, $h\geq 0$. To see that $h$ is $r$-superharmonic, we recognize that $h=\min\{h_1,h_2\},$ where $h_1(x)=\lambda e^{\beta x} $, $h_2(x):=\lambda+1$. Since $h_1$ and $h_2$ are obviously $r$-superharmonic, so is $h$. We want to remark, that $h$ is not smooth at $x=1$; in particular it is not smooth enough to apply It\^o's formula (in its standard form without local time), as required in most verification theorems for impulse control. It remains to be checked that $Mh(x)\leq h(x)$. Keeping \eqref{eq:M_brownian} in mind, this is trivial for $x\geq 1$. For $x<1$, we have $Mh(x)=\lambda-1$. A short calculation yields that $Mh(x)\leq h(x)$ iff $e^\beta\geq 2$, which is the assumption above. In that case, we indeed have $Mh(x)< h(x)$ for all $x<1$. We have proved that $h\in \mathcal{H}$. \\
Using Proposition \ref{prop:h_maj} (i), we obtain that $h\geq v$. Inspired by Theorem \ref{thm:minimizer}, we define
\begin{align*}
&\tau_n=\inf\{t>\tau_{n-1}:h(X_t)=Mh(X_t)\}=\inf\{t>\tau_{n-1}:X_t\geq1\},\\
&\gamma_n=0.
\end{align*}
This is obviously an admissible impulse control strategy. Since $h$ is $r$-harmonic on $(-\infty,1)$, it fulfills the requirements of Proposition \ref{prop:h_maj} (ii) and we obtain that $h\geq v$ and the impulse control strategy mentioned above is optimal.\\[0.2cm]
\textit{2. case}: $e^\beta< 2$. In this case it turns out to be optimal to shift back the process to 0, whenever $X$ is below a threshold $x^*$, $x^*<0$ to be found. Analogously to the discussion above, we make the Ansatz
\[h(x)=\begin{cases}
h(0)-1,&\;\;x\leq x^*,\\
\lambda_1 e^{\beta x}+\lambda_2 e^{-\beta x},&\;\;x\in(x^*,1),\\
h(0)+1,&\;\;x\geq 1,
\end{cases}\]
see Figure \ref{fig:v2}. We find the unknown parameters $h(0),\lambda_1,\lambda_2,x^*$ via the conditions
\begin{align*}
\lambda_1 +\lambda_2=h(0),&\;\;\;\;\lambda_1 e^{\beta}+\lambda_2 e^{-\beta}=h(0)+1\\
\lambda_1 e^{\beta x^*}+\lambda_2 e^{-\beta x^*}=h(0)-1,&\;\;\;\;\beta\lambda_1 e^{\beta x^*}-\beta \lambda_2 e^{-\beta x^*}=0.
\end{align*}
Indeed, it is not hard to check, that the parameters are uniquely determined by this four equations under the assumption $e^\beta< 2$. The first two conditions are analogously to the 1. case, the second two guarantee that $h$ is smooth at $x^*$, which leads to the conclusion that 
\[h_1(x):=\begin{cases}
h(0)-1,&\;\;x\leq x^*,\\
\lambda_1 e^{\beta x}+\lambda_2 e^{-\beta x},&\;\;x\in(x^*,\infty),
\end{cases}\]
is $r$-superharmonic. Therefore, so is $h$ with the same argument as in the 1. case. Again, we obtain $h\in \mathcal{H}$ and using Proposition \ref{prop:h_maj} we see that $v=h$, and
\begin{align*}
&\tau_n=\inf\{t>\tau_{n-1}:X_t\not\in[x^*,1]\},\\
&\gamma_n=0
\end{align*}
is optimal.

\subsection{Connection to the ordinary theory of optimal stopping}\label{subsec:OS}
The line of argument given above is inspired by the treatment of optimal stopping problems, as presented for example in \cite{PS}. Now, we want to discuss how to find the optimal stopping problems as a subclass in the class of optimal impulse control problems. In the setting above, assume that $E$ contains a grave $\partial$, that is never reached by the uncontrolled process when started in $E\setminus \{\partial\}$. If the process reaches $\partial$, it stays there forever. Then we assume $\mathcal{A}(x)=\{ x,\partial\}$ for $x\not=\partial$ and $\mathcal{A}(\partial)=\{\partial\}$. Note that condition \eqref{eq:trading} is obviously fulfilled. Furthermore, write $g(x):=-K(x,\partial)$ for all $x$ and assume $f(\partial)=0$. For each admissible impulse control $S=(\tau_n,\gamma_n)$ (when ignoring the trivial case $X_{\tau_n}=X_{\tau_{n,-}}$), it holds that $\gamma_n=\partial$ for all $n$. Therefore, 
\[\E_x^S\left(\int_0^\infty e^{-rs}f(X_s)ds-\sum_{n=0}^\infty e^{-r\tau_n}K(X_{\tau_n,-},X_{\tau_n})\right)=\E_x\left(\int_0^{\tau_1} e^{-rs}f(X_s)ds+e^{-r \tau_1}g(X_{\tau_1-})\right).\]
We obtain that we are indeed faced with an ordinary optimal stopping problem with discounting and an integral term. Hence, we can consider this class of optimal stopping problems as a subclass of the impulse control problems. Letting $\ov f\equiv 0$ for simplicity and noting that $h(\partial)=0$ for all $r$-superharmonic functions $h$, we see that the condition
\[M(h+\ov f)(x)\leq h+\ov f\]
becomes
\[g(x)\leq h(x),\]
i.e. $h$ majorizes $g$. This corresponds to the well-known results for ordinary optimal stopping problems. Note that condition \eqref{eq:integrability} in Theorem \ref{thm:minimizer} boils down to the standard integrability condition
\[\E_x\sup_{t\geq 0}e^{-rt}|g(X_t)|<\infty.\]

\subsection{Multiple stopping problem with random refraction period}
As a generalization of optimal stopping problems, we now consider the following class of multiple stopping problems: In the last years, a theory was developed for solving multiple stopping problems inspired by applications to swing options in the energy market, see \cite{Car}, \cite{Touzi}, and \cite{CIJ}. In a Markov process setting, these are problems of the following form:
\[\sup_{\sigma_1,...,\sigma_k}\E_x\sum_{i=1}^ke^{-r\sigma_i}g(Y_{\sigma_i}),\]
where $Y$ is a strong Markov process with state space $\X$, $g$ is a measurable function $\geq 0$, and the supremum is taken over all stopping times $\sigma_1,...,\sigma_k$, where it is assumed that between each two exercises, there is a refraction period of deterministic length $\delta>0$, that is $\sigma_{i+1}\leq \sigma_i+\delta$ for all $i<k$. The main theoretical result was that this problem can be reduced to a sequence of $n$ ordinary optimal stopping problems, see \cite{Touzi} and \cite{CIJ}. \\
Now, we will show that this result can also be immediately obtained using the theory developed before. To this end, we introduce a new Markov $X$ with state space
\[E=\left(\bigcup_{i=1}^k\X\times\{i\}\right)\cup\left(\bigcup_{i=2}^k\X\times[0,\delta]\times\{i\}\right)\cup\{\partial\}\]
as follows: 
\begin{itemize}
\item $\partial$ is an absorbing state.
\item Started in a point $(x,i)\in\X\times \{i\},\;i\geq1,$ the process does not leave $\X\times \{i\}$ and has the same dynamics on this space as $Y$ on $\X$. 
\item Started in a point $(x,s,i)\in\X\times [0,\delta]\times\{i\},\;i\geq2$, the process is given by $(x,s+r,i)$ for $r\in[0,\delta-s)$ and is then restarted in $\X\times\{i-1\}$ with initial distribution $\Pro_x(X_\delta\in \cdot)$. 
\end{itemize}
For this process, we specify an impulse control problem as follows:
\begin{align*}
\A(z)&=\{z\}\mbox{ for all }z\in\left(\bigcup_{i=2}^k\X\times[0,\delta]\times\{i\}\right)\cup\{\partial\},\\
\A(x,i)&=\{(x,i),(x,0,i)\}\mbox{ for all }(x,i)\in\X\times\{i\},i\geq2,\\
\A(x,1)&=\{(x,1),\partial\}\mbox{ for all }(x,1)\in\X\times\{1\},\\
K((x,i),(x,0,i))&=-g(x)=K((x,1),\partial)\mbox{ for all }x\in \X,i\geq2,\\
f&=0.
\end{align*}
By the construction of the controlled process, we can identify each impulse control strategy $(\tau_n,\gamma_n)_{n\in\N}$ for a starting state in $\X\times\{k\}$ with a sequence of stopping times $\sigma_1,...,\sigma_k$ with $\sigma_{i+1}\leq \sigma_i+\delta$ for all $i<k$. Therefore, the multiple stopping problem can be identified with the impulse control problem described above. Let $v$ denote the value function of the impulse control problem. Then obviously
\[v(\partial)=0.\]
For the maximum operator, it holds that
\[Mw(x,1)=w(\partial)-K((x,1),\partial)=w(\partial)+g(x),\quad x\in\X,\]
Therefore, on $\X\times\{1\}$ $v$ can be found as the smallest $r$-superharmonic majorant of $g(x)$, this is $v$ is the value function of the optimal ordinary stopping problem for $g$. Now, write
\[h_{\delta,1}(x):=e^{-r\delta}\E_xv(Y_\delta,1)\left(=v(x,2,0)\right),\quad x\in\X.\]
Using this notation, we obtain on $\X\times\{2\}$ that the maximum operator is given by
\[Mw(x,2)=w(x,2,0)+g(x)=g(x)+e^{-\delta t}\E_xw(Y_\delta,1).\]
We obtain that on $\X\times \{2\}$ the value function $v$ is the smallest $r$-superharmonic majorant of the function
\[g+h_{\delta,1},\]
i.e. $v$ is found to be the value function of the ordinary stopping problem with reward function $g+h_{\delta,1}$. Using induction, we obtain the same result for each $k$ and we see that the value function can be found by solving a sequence of ordinary optimal stopping problems. 

\subsection{Optimal switching problems}
One of the most prominent class of impulse control problems is given by optimal switching problems, see e.g. \cite{BL}, \cite{LB}, and \cite{BE}. Using our notations, the problem can be stated as follows:\\
For two Markov processes $X^{(0)},X^{(1)}$ with joint state space $\hat E$ consider the space
\[E=\{0,1\}\times \hat E\]
and the stochastic process $X$ with (uncontrolled) distribution as those of $(i,X^{(i)})$ when started in $(i,x)\in\{i\}\times \hat E,\;i=0,1.$ The set of possible controls is then given by
\[\mathcal{A}(i,x)=\{(i,x),(i-1,x)\},\]
that is, the decision maker can control the distribution of the underlying process. Therefore, the maximum operator is given by 
\[Mh(i,x)=h(1-i,x)-k_i(x),\;i=0,1,\;x\in\hat E,\]
where we write $k_i(x)=K((i,x),(1-i,x))$. Therefore, the transformed value function $v-\ov f$ can be characterized as the smallest $r$-superharmonic function $h\geq 0$ that fulfills
\[(h+\ov f)(i,x)\geq (h+\ov f)(1-i,x)-k_i(x),\;i=0,1,\;x\in\hat E,\]
which can be interpreted as a coupled system of two optimal stopping problems.

\subsection{Application to general L\'evy processes}\label{subsec:levy}
Now, we discuss the case of a general L\'evy process $X$ on $E=\R$ to illustrate that the general theory leads to useful results in particularly interesting cases. In this generality, there is no hope to find the solution of the optimal impulse control problem explicitly in greater generality. Nonetheless, we want to describe the structure of the value function, that may be useful as an Ansatz in many concrete situations of interest. The main tool for such a representation is the general integral representation of $r$-superharmonic/excessive functions using the Riesz representation theorem. \\
We concentrate on the particularly interesting case that an optimal impulse control strategy is a constant-boundary strategy, that is there exist $a<\alpha\leq \beta<b$ such that the impulse control strategy $S$ given by
\begin{align*}
\tau_n&=\inf\{t>\tau_{n-1}:X_t\not\in(a,b)\}\\
\gamma_n&=\begin{cases} \alpha,&\;\;X_{\tau_n,-}\leq a,\\ \beta,&\;\;X_{\tau_n,-}\geq b.\end{cases}
\end{align*}
There is no hope to find the parameters $(a,\alpha, \beta,b)$ and the associated value function explicitly in great generality. Even ordinary optimal stopping problems for L\'evy processes are very hard to solve. A new method developed over the last years is to make the Ansatz to write the value function as an expectation of the running maximum or minimum of the process evaluated at an independent $Exp(r)$-distributed time $T$, see e.g. \cite{NS2}, \cite{M}, \cite{Su}, \cite{DLU}, \cite{CST}. We will show in the following, that one can be hopeful to use the same approach also for impulse control problems. \\
The main tool is the following representation of general non-negative $r$-excessive functions $k$ (under some conditions):
\[k(x)=\int_\R G_r(x,y)\sigma(dy)\]
for some Radon measure $\sigma=\sigma_k$, where $G_r(x,y)$ denotes the Green kernel of the process; this representation is based on the Riesz representation theorem, see \cite{MS} and the references therein for a more detailed discussion. The measure $\sigma$ does not charge the points in $\R$, where $k$ is $r$-harmonic. Now, we use this representation for $k=v-\ov f$. By equation \eqref{eq:harm} we see that $v-\ov f$ is typically $r$-harmonic on $(a,b)$. Therefore, $\sigma$ has support on $(a,b)^c$. We may write
\[(v-\ov f)(x)=\int_{(-\infty,a]} G_r(x,y)\sigma(dy)+\int_{[b,\infty)} G_r(x,y)\sigma(dy)\]
for all $x\in E$. Writing $M=\sup\{X_t:t<T\}$ and $I=\inf\{X_t:t<T\}$ and assume that $M$ and $I$ have densities $f_M,f_I$, by the Wiener-Hopf-factorization, we have the representation
\[r G_r(x,y)=\begin{cases}
\int_{-\infty}^{y-x}f_I(t)f_M(y-x-t)dt&,\;y-x<0,\\
\int_{y-x}^{\infty}f_M(t)f_I(y-x-t)dt&,\;y-x>0,
\end{cases}
\]
see \cite{MS}. We obtain for all $x\in(a,b)$
\begin{align*}
(v-\ov f)(x)=&\int_{(-\infty,a]} G_r(x,y)\sigma(dy)+\int_{[b,\infty)} G_r(x,y)\sigma(dy)\\
=&r^{-1}\int_{-\infty}^a \int_{-\infty}^{y-x}f_I(t)f_M(y-x-t)dt\sigma(dy)\\&+r^{-1}\int_b^\infty \int_{y-x}^{\infty}f_M(t)f_I(y-x-t)dt\sigma(dy)\\
=&r^{-1}\int_{-\infty}^{a-x} f_I(t)\int_{x+t}^{a}f_M(y-x-t)\sigma(dy)dt\\&+r^{-1}\int_{b-x}^\infty f_M(t)\int_{b}^{x+t}f_I(y-x-t)\sigma(dy)dt\\
=&\E_x(Q_*(I);I\leq a)+\E_x(Q^*(M);M\geq b),
\end{align*}
where 
\[Q_*(z)=r^{-1}\int_{z}^{a}f_M(y-z)\sigma(dy),\;\;\; Q^*(z)=r^{-1}\int_{b}^{z}f_I(y-z)\sigma(dy).\]
This gives a representation of the value function in terms of the running maximum and minimum, as desired. On the other hand, functions of the form 
\[\int_{(-\infty,a]} G_r(x,y)\sigma(dy)+\int_{[b,\infty)} G_r(x,y)\sigma(dy)\]
are $r$-superharmonic and one can start with these functions to find a candidate solution for the value function. This discussion opens the door to use the strong methods developed for optimal stopping with underlying L\'evy processes for impulse control problems. Since carrying out the details for a concrete example is quite lengthy, we stop the discussion here, but treat an interesting example from portfolio optimization for fixed transaction costs for general L\'evy processes in a forthcoming article. 
\section*{Acknowledgement}
I would like to thank Ralf Korn and J\"orn Sass for a discussion about the applicability of the theory of optimal stopping for L\'evy processes to impulse control (see Subsection \ref{subsec:levy}), that was the starting point for this paper. Furthermore, I would like to thank A. Irle and A. Ludwig for useful remarks. 

\bibliographystyle{plain}
\bibliography{literatur}

\begin{thebibliography}{10}

\bibitem{A}
Luis H.~R. Alvarez.
\newblock A class of solvable impulse control problems.
\newblock {\em Appl. Math. Optim.}, 49(3):265--295, 2004.

\bibitem{A04}
Luis H.~R. Alvarez.
\newblock Stochastic forest stand value and optimal timber harvesting.
\newblock {\em SIAM J. Control Optim.}, 42(6):1972--1993 (electronic), 2004.

\bibitem{AL}
Luis H.~R. Alvarez and Jukka Lempa.
\newblock On the optimal stochastic impulse control of linear diffusions.
\newblock {\em SIAM J. Control Optim.}, 47(2):703--732, 2008.

\bibitem{BE}
Erhan Bayraktar and Masahiko Egami.
\newblock On the one-dimensional optimal switching problem.
\newblock {\em Math. Oper. Res.}, 35(1):140--159, 2010.

\bibitem{BLe}
M.~Beibel and H.~R. Lerche.
\newblock A note on optimal stopping of regular diffusions under random
  discounting.
\newblock {\em Teor. Veroyatnost. i Primenen.}, 45(4):657--669, 2000.

\bibitem{BL}
Alain Bensoussan and Jacques-Louis Lions.
\newblock {\em Impulse control and quasivariational inequalities}.
\newblock $\mu $. Gauthier-Villars, Montrouge, 1984.
\newblock Translated from the French by J. M. Cole.

\bibitem{CZ}
Abel Cadenillas and Fernando Zapatero.
\newblock Classical and impulse stochastic control of the exchange rate using
  interest rates and reserves.
\newblock {\em Math. Finance}, 10(2):141--156, 2000.
\newblock INFORMS Applied Probability Conference (Ulm, 1999).

\bibitem{Car}
Ren{\'e} Carmona and Savas Dayanik.
\newblock Optimal multiple stopping of linear diffusions.
\newblock {\em Math. Oper. Res.}, 33(2):446--460, 2008.

\bibitem{Touzi}
Ren{\'e} Carmona and Nizar Touzi.
\newblock Optimal multiple stopping and valuation of swing options.
\newblock {\em Math. Finance}, 18(2):239--268, 2008.

\bibitem{CS}
Moon~Jung Cho and Richard~H. Stockbridge.
\newblock Linear programming formulation for optimal stopping problems.
\newblock {\em SIAM J. Control Optim.}, 40(6):1965--1982, 2002.

\bibitem{CI}
S{{\"o}}ren Christensen and Albrecht Irle.
\newblock A harmonic function technique for the optimal stopping of diffusions.
\newblock {\em Stochastics}, 83(4-6):347--363, 2011.

\bibitem{CIJ}
S{{\"o}}ren Christensen, Albrecht Irle, and Stephan J{\"u}rgens.
\newblock Optimal multiple stopping with random waiting times.
\newblock arXiv: 1205.1966v1, 2012.

\bibitem{CST}
S{\"o}ren Christensen, Paavo Salminen, and Bao~Quoc Ta.
\newblock Optimal stopping of strong markov processes.
\newblock {\em Stochastic Processes and their Applications}, 123(3):1138 --
  1159, 2013.

\bibitem{DK}
Savas Dayanik and Ioannis Karatzas.
\newblock On the optimal stopping problem for one-dimensional diffusions.
\newblock {\em Stochastic Process. Appl.}, 107(2):173--212, 2003.

\bibitem{DLU}
G.~Deligiannidis, H.~Le, and S.~Utev.
\newblock Optimal stopping for processes with independent increments, and
  applications.
\newblock {\em J. Appl. Probab.}, 46(4):1130--1145, 2009.

\bibitem{Dy63}
E.~B. Dynkin.
\newblock Optimal choice of the stopping moment of a {M}arkov process.
\newblock {\em Dokl. Akad. Nauk SSSR}, 150:238--240, 1963.

\bibitem{E}
Masahiko Egami.
\newblock A direct solution method for stochastic impulse control problems of
  one-dimensional diffusions.
\newblock {\em SIAM J. Control Optim.}, 47(3):1191--1218, 2008.

\bibitem{K}
Ralf Korn.
\newblock Some applications of impulse control in mathematical finance.
\newblock {\em Math. Methods Oper. Res.}, 50(3):493--518, 1999.

\bibitem{LB}
Suzanne~M. Lenhart and Stavros~A. Belbas.
\newblock A system of nonlinear partial differential equations arising in the
  optimal control of stochastic systems with switching costs.
\newblock {\em SIAM J. Appl. Math.}, 43(3):465--475, 1983.

\bibitem{M}
E.~Mordecki.
\newblock Optimal stopping and perpetual options for {L}\'evy processes.
\newblock {\em Finance Stoch.}, 6(4):473--493, 2002.

\bibitem{MS}
E.~Mordecki and P.~Salminen.
\newblock Optimal stopping of {H}unt and {L}\'evy processes.
\newblock {\em Stochastics}, 79(3-4):233--251, 2007.

\bibitem{MO}
Gabriela Mundaca and Bernt {\O}ksendal.
\newblock Optimal stochastic intervention control with application to the
  exchange rate.
\newblock {\em J. Math. Econom.}, 29(2):225--243, 1998.

\bibitem{NS2}
A.~Novikov and A.~Shiryaev.
\newblock On solution of the optimal stopping problem for processes with
  independent increments.
\newblock {\em Stochastics}, 79(3-4):393--406, 2007.

\bibitem{OS}
Bernt {\O}ksendal and Agn{{\`e}}s Sulem.
\newblock {\em Applied stochastic control of jump diffusions}.
\newblock Universitext. Springer, Berlin, second edition, 2007.

\bibitem{PaStettner}
Jan Palczewski and {\L}ukasz Stettner.
\newblock Impulsive control of portfolios.
\newblock {\em Appl. Math. Optim.}, 56(1):67--103, 2007.

\bibitem{PS}
Goran Peskir and Albert Shiryaev.
\newblock {\em Optimal stopping and free-boundary problems}.
\newblock Lectures in Mathematics ETH Z{\"u}rich. Birkh{\"a}user Verlag, Basel,
  2006.

\bibitem{S}
{\L}ukasz Stettner.
\newblock On impulsive control with long run average cost criterion.
\newblock {\em Studia Math.}, 76(3):279--298, 1983.

\bibitem{Su}
B.~A. Surya.
\newblock An approach for solving perpetual optimal stopping problems driven by
  {L}{\'e}vy processes.
\newblock {\em Stochastics}, 79(3-4):337--361, 2007.

\bibitem{W}
Yngve Willassen.
\newblock The stochastic rotation problem: a generalization of {F}austmann's
  formula to stochastic forest growth.
\newblock {\em J. Econom. Dynam. Control}, 22(4):573--596, 1998.

\end{thebibliography}

  \begin{appendix}
  \section{On the formal definition of impulse control strategies}\label{appendix:impulse}
Now, we give a more formal definition of an impulse control strategy and the corresponding controlled process by following the classical construction, see e.g. \cite{S} to give a reference in\ English:\\
We consider the new probability space $\tilde\Omega=\Omega^\N$. Then, 
\begin{align*}
&\tau_1\mbox{ is an }\left(\mathcal{F}_t\otimes\bigotimes_{k\geq2}\{\emptyset,\Omega\}\right)_{t\geq0}\mbox{ stopping time},\\
&\gamma_1\mbox{ is }\mathcal{F}_{\tau_1}\otimes\bigotimes_{k\geq2}\{\emptyset,\Omega\}\mbox{-mesurable}.
\end{align*}

and, more generally, for all $n\in\N$
\begin{align*}
&\tau_n\mbox{ is an }\left(\bigotimes_{l=1}^n\mathcal{F}_t\otimes\bigotimes_{k\geq n+1}\{\emptyset,\Omega\}\right)_{t\geq0}\mbox{ stopping time},\\
&\gamma_n\mbox{ is }\bigotimes_{l=1}^n\mathcal{F}_{\tau_n}\otimes\bigotimes_{k\geq n+1}\{\emptyset,\Omega\}=:\tilde{\mathcal{F}}^{(n)}_{\tau_n}\mbox{-mesurable}.
\end{align*}
At each random time $\tau_n$ an impulse is exercised and the process is restarted at the new state $\gamma_n$. $\tau_n$ and $\gamma_n$ only depend on the first $n$ coordinates in $\tilde\Omega$. The restarted processes is described by the $(n+1)$-th coordinate of $\tilde\Omega$. More precisely, there exists a family  $\Pro^S_x,\;x\in E,$ of probability measures on $\tilde\Omega$, that is characterized by the distributions of the coordinate processes as follows: For all $s\geq0,n\in\N,A_1,...,A_{n+1}$ measurable,  
\begin{align*}
&\Pro^S_x(X^{(1)}_{\tau_n+s}\in A_1,...,X^{(n)}_{\tau_n+s}\in A_n,,X^{(n+1)}_{\tau_n+s}\in A_{n+1}|\tilde{\mathcal{F}}^{(n)}_{\tau_n})\\
=&\delta_{X^{(1)}_{\tau_1}}(A_1)\cdot...\cdot \delta_{X^{(n)}_{\tau_n}}(A_n)\Pro_{\gamma_n}(X_s\in A_{n+1})\quad \mbox{on }\{\tau_n+s<\tau_{n+1}\},
\end{align*}
where $\delta$ denotes the Dirac measure. The trajectories of the controlled process are then given by the trajectories of the copies $X^n,n=1,2,...$ of $X$ on $\tilde\Omega$ as follows:
\[X_t(\omega)=X_t^{n}(\omega_n)\mbox{ for }t\in[\tau_{n-1},\tau_n),\;X_{\tau_n}(\omega)=\gamma_n(\omega_1,...,\omega_n),\;\tau_0=0.\]
  \end{appendix}

\end{document}